\newtheorem{theorem}{Theorem}[section]
\newtheorem{lemma}[theorem]{Lemma}
\newtheorem{corollary}[theorem]{Corollary}
\theoremstyle{definition}
\newtheorem{definition}[theorem]{Definition}
\newtheorem{example}[theorem]{Example}
\theoremstyle{remark}
\newtheorem{remark}[theorem]{Remark}
\begin{document}

\setcounter{page}{1}

\title{ON $e^*$-TOPOLOGICAL RINGS}

\author{C. DALKIRAN$^{\rm a}$,  M. ÖZKOÇ$^{\rm b,\ast}$}

\address{$^{B}$Department of Mathematics, Muğla Sıtkı Koçman University, 48000, Menteşe-Muğla, Turkey.}

\email{murad.ozkoc@mu.edu.tr}

\address{$^{A}$Department of Mathematics, Graduate School of Natural and Applied Sciences, Muğla Sıtkı Koçman University, 48000, Menteşe-Muğla, Turkey.}

\email{cndlkrn10@gmail.com}


\subjclass[2010]{54H13}

\keywords{topological ring, $e^*$-open, $e^*$-topological ring, $e^*$-continuous.}

\date{--- $^{*}$Corresponding author}

\begin{abstract}

The main purpose of this paper is to introduce the concept of $e^*$-topological ring. This class appears as a generalized form of the class of $\beta$-topological rings. In addition, we have discussed the relation between the concept of $e^*$-topological ring and some other types of topological rings existing in the literature. Also, some fundamental results about $e^*$-topological rings are revealed. Furthermore, we give some counterexamples related to our results.
 \\
\end{abstract}
\maketitle

\section[Introduction]{Introduction}
To find solutions to some problems in topology, it is sometimes necessary to use algebra. This situation leads to the definition of different concepts in the related field. One of these concepts is the concept of topological ring. In order to better understand topological rings, the concept of topological group should be well known. A topological group is a group $G$ that is also a topological space such that the addition and the inversion are continuous as maps $\psi: G\to G, x\mapsto -x$ and $\varphi : G\times G\to G, (x,y)\mapsto x+y,$ where $G\times G$ carries the product topology. The concept of topological ring has been first introduced in  \cite{Kap1,Kap2} by Kaplansky. A topological ring is a ring $R$
that is also a topological space such that both the addition and the multiplication are continuous as maps $\varphi : R\times R\to R,$ where $R\times R$ carries the product topology. That means $R$ is an additive topological group and a multiplicative topological semigroup. 
The types of open sets in the literature, such as $\alpha$-open \cite{Njastad}, semi-open \cite{Levine},  pre-open \cite{Mas}, $\beta$-open \cite{Abd}, etc. allow a generalization of the notion of topological ring. Studying the properties of these generalized forms and investigating their relations with topological rings are just some of the different advances in the literature. Some of the recent advancements in this direction are $\beta$-topological rings \cite{Bil}, irresolute topological rings \cite{Salih} and $\alpha$-irresolute topological rings \cite{M.Ram}.
In 2021, Billawaria et al. have been studied $\beta$-topological ring which is more general notion than the notion of topological ring \cite{Bil}. They have revealed some fundamental properties of $\beta$-topological rings. 

In this paper, we introduce the notion of $e^*$-topological ring by utilizing $e^*$-open sets defined by Ekici in \cite{Ekici}. We obtain some of its fundamental properties. Also, we compare between this notion and some notions existing in the literature. Furthermore, we give some counterexamples regarding our results obtained in the scope of this study.



\section[Preliminaries]{Preliminaries}
Throughout this paper, $(X,\tau )$ and $(Y,\sigma)$ (or simply $X$ and $Y$) always mean topological spaces on which no separation axioms are assumed unless explicitly stated. For a subset $A$ of a topological space $X,$ the closure of $A$ and the interior of $A$ are denoted by $cl(A)$ and $int(A)$, respectively. The family of all closed (resp. open) sets of $X$ is denoted by $C(X)(\text{resp. } O(X))$. In addition, the family of all open sets of $X$ containing a point $x$ of $X$ is denoted by $O(X,x)$.  Recall that a subset $A$ of a space $X$ is called regular open \cite{sto} (resp. regular closed \cite{sto}) if $A = int(cl(A))$ (resp. $A = cl(int(A)))$. The family of all regular open subsets of $X$ is denoted by $RO(X).$ The family of all regular open sets of $X$ containing a point $x$ of $X$ is denoted by $RO(X,x).$
A subset $A$ of a space $X$ is called $\delta$-open \cite{vel} if for each $x\in A$ there exists a regular open set $V$ such that $x\in V\subseteq A$. A set $A$ is said to be $\delta$-closed if its complement is $\delta$-open. The intersection of all regular closed sets of $X$ containing $A$ is called the $\delta$-closure \cite{vel} of $A$ and is denoted by $\delta$-$cl(A)$. Dually, the union of all regular open sets of $X$ contained in $A$ is called the $\delta$-interior \cite{vel} of $A$ and is denoted by $\delta$-$int(A)$. 
A subset $A$ of a space $X$ is called $e^*$-open  
if $A\subseteq cl(int(\delta\text{-}cl(A))).$ The complement of an $e^*$-open  set is called $e^*$-closed. The intersection of all $e^*$-closed  sets of $X$ containing
$A$ is called the $e^*$-closure of $A$ and is denoted by $e^*$-$cl(A)$. Dually, the union of all $e^*$-open sets of $X$ contained in $A$ is called the $e^*$-interior of $A$ and is denoted by $e^*$-$int(A)$. The family of all $e^*$-open subsets (resp. $e^*\text{-}$closed) $X$ denoted by $e^*O(X)$ (resp. $e^*C(X))$. The family  of all $e^*\text{-}$open (resp. $e^*$-closed) sets of $X$ containing a point $x$ of $X$ denoted by $e^*O(X,x)$ (resp. $e^*C(X,x)).$
\begin{definition}\cite{Kap1}
 Let $(R,+,\cdot)$ be a ring and $\tau$ be a topology on $R.$ The quadruple $(R,+,\cdot,\tau)$ is called a topological ring if the following conditions are satisfied:
 
 $i)$ For each $x,y\in R$ and each open set $W\in O(R,x+y),$ there exist open sets $U$ and $V$ in $R$ containing  $x$ and $y$, respectively, such that $U+V\subseteq W,$ 

 $ii)$ For each $x\in R$ and each open set $V\in O(R,-x)$,  there exists $U\in O(R,x)$ such that $-U\subseteq  V,$
 
 $iii)$ For each $x,y\in R$ and each open set  $W\in O(R,xy)$, there exist  open sets $U$ and $V$ in $R$ containing  $x$ and $y$, respectively, such that $UV\subseteq W.$ 
\end{definition}

  \begin{definition}\cite{Bil}
 Let $(R,+,\cdot)$ be a ring and $\tau$ be a topology on $R.$ The quadruple $(R,+,\cdot,\tau)$ is called an $\beta\text{-}$topological ring if the following conditions are satisfied:
 
 $i)$ For each $x,y\in R$ and each open set $W\in O(R,x+y),$ there exist $\beta$-open sets $U$ and $V$ in $R$ containing  $x$ and $y$, respectively, such that $U+V\subseteq W,$ 

 $ii)$ For each $x\in R$ and each open set $V\in O(R,-x)$,  there exists $U\in\beta O(R,x)$ such that $-U\subseteq  V,$
 
 $iii)$ For each $x,y\in R$ and each open set  $W\in O(R,xy)$, there exist  $\beta$-open sets $U$ and $V$ in $R$ containing  $x$ and $y$, respectively, such that $UV\subseteq W.$   
 \end{definition}  

\begin{definition}
\cite{Ekici} A function $f:(X,\tau)\to (Y,\sigma)$ is said to be $e^*$-continuous if $f^{-1}[A]$ is $e^*$-open in $X$ for every $A\in\sigma.$   
\end{definition}

\begin{lemma}
\cite{Ekici} A function $f:(X,\tau)\to (Y,\sigma)$ is $e^*$-continuous if and only if for every $x\in X$ and for every $V\in O(Y,f(x)),$ there exists $U\in e^*O(X,x)$ such that $f[U]\subseteq V.$    
\end{lemma}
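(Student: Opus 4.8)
The plan is to prove the two implications of the biconditional separately, treating this as the familiar ``global versus local'' characterization of continuity transported to the $e^*$-open setting. The forward direction will be essentially immediate from the definition of $e^*$-continuity, while the converse will hinge on a single structural fact about $e^*$-open families.

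For the forward implication I would assume $f$ is $e^*$-continuous and exhibit the required neighborhood directly. Given $x\in X$ and $V\in O(Y,f(x))$, the set $V$ is open in $Y$, so by the definition of $e^*$-continuity the preimage $f^{-1}[V]$ is $e^*$-open in $X$. Since $f(x)\in V$ gives $x\in f^{-1}[V]$, taking $U:=f^{-1}[V]$ produces an element of $e^*O(X,x)$, and $f[U]=f[f^{-1}[V]]\subseteq V$ is automatic. This direction needs no extra machinery.

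For the converse I would assume the local condition and verify that $f^{-1}[A]$ is $e^*$-open for an arbitrary $A\in\sigma$. For each $x\in f^{-1}[A]$ we have $f(x)\in A$, so $A\in O(Y,f(x))$; the hypothesis then supplies $U_x\in e^*O(X,x)$ with $f[U_x]\subseteq A$, equivalently $U_x\subseteq f^{-1}[A]$. Since $x\in U_x\subseteq f^{-1}[A]$ for every such $x$, we obtain the representation $f^{-1}[A]=\bigcup_{x\in f^{-1}[A]}U_x$ as a union of $e^*$-open sets.

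The crux, and the only step that is not purely formal, is to conclude that this union is itself $e^*$-open. I would establish (or invoke) the fact that an arbitrary union of $e^*$-open sets is $e^*$-open: if each $A_i\subseteq cl(int(\delta\text{-}cl(A_i)))$, then by monotonicity of $cl$, $int$, and $\delta\text{-}cl$ each $A_i$ lies inside $cl(int(\delta\text{-}cl(\bigcup_j A_j)))$, whence so does their union. This closure-under-unions property is the main thing to pin down; once it is in hand, $f^{-1}[A]\in e^*O(X)$ follows and the proof is complete.
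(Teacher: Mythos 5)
Your proof is correct: both directions are handled properly, and you have correctly identified the one nontrivial ingredient — that arbitrary unions of $e^*$-open sets are $e^*$-open — and justified it by monotonicity of $cl$, $int$, and $\delta$-$cl$ applied to the defining inclusion $A_i\subseteq cl(int(\delta\text{-}cl(A_i)))$. The paper itself states this lemma as a citation to Ekici without proof, and your argument is the standard one that establishes it.
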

 
\section{$e^*$-Topological Rings}
In this section, we introduce and study the concept of $e^*$-topological ring by utilizing $e^*$-open sets.

 
 \begin{definition}\label{def31}
Let $(R,+,\cdot)$ be a ring and $\tau$ be a topology on $R.$ The quadruple $(R,+,\cdot,\tau)$ is called an $e^*$-topological ring if the following conditions are satisfied:
 
$i)$ For each $x,y\in R$ and each open set $W\in O(R,x+y),$ there exist $U\in e^*O(R,x)$ and $V\in e^*O(R,y)$ such that $U+V\subseteq W,$ 

$ii)$ For each $x\in R$ and each open set $V\in O(R,-x)$, there exists $U\in e^*O(R,x)$ such that $-U\subseteq  V,$
 
$iii)$ For each $x,y\in R$ and each open set  $W\in O(R,xy)$, there exist  $U\in e^*O(R,x)$ and $V\in e^*O(R,y)$ such that $UV\subseteq W.$    
 \end{definition}
 
\begin{remark}
Since every $\beta$-open set is an $e^*$-open set, it is not difficult to see that every $\beta$-topological ring is an $e^*$-topological ring. However, the converse need not always be true as shown by the following example. 
\end{remark}	

\begin{example} \label{34}
Let $R=\{a,b,c,d\}$ and  $\tau=\{\emptyset,R,\{a\},\{a,b\}\}.$ Let the addition and multiplication operations on $R$ be as given in the tables below:

\[
\begin{tabular}{|c|c|c|c|c|}
\hline 
$+$ & $a$ & $b$ & $c$ & $d$ \\
\hline
$a$ & $a$ & $b$ & $c$ & $d$ \\
\hline
$b$ & $b$ & $c$ & $d$ & $a$ \\
\hline
$c$ & $c$ & $d$ & $a$ & $b$ \\
\hline
$d$ & $d$ & $a$ & $b$ & $c$ \\
\hline
\end{tabular}
\hspace{2cm}
\begin{tabular}{|c|c|c|c|c|}
\hline 
$\cdot$ & $a$ & $b$ & $c$ & $d$ \\
\hline
$a$ & $a$ & $a$ & $a$ & $a$ \\
\hline
$b$ & $a$ & $c$ & $a$ & $c$ \\
\hline
$c$ & $a$ & $a$ & $a$ & $a$ \\
\hline
$d$ & $a$ & $c$ & $a$ & $c$ \\
\hline
\end{tabular}
\]
\end{example}
In this topological space, simple calculations show that $e^*O(R)=2^{R}$ and $\beta O(R)=\{\emptyset,R,\{a\},\{a,b\},\{a,c\},\{a,d\},\{a,b,c\},\{a,b,d\},\{a,c,d\}\}.$ Then, it is clear that $(R,+,\cdot,\tau)$ is an $e^*$-topological ring but it is not a $\beta$-topological ring.
 
 \begin{example} Let $(\mathbb{R},+,\cdot)$ be the ring of real numbers with usual topology $\mathcal{U}$. Then, $(\mathbb{R},+,\cdot,\mathcal{U})$ is an $e^*\text{-}$topological ring.
\end{example}

 \begin{example} Let  $(R,+,\cdot)$ be any ring with the discrete topology $2^X.$ Then, $(R,+,\cdot,2^X)$ is an $e^*\text{-}$topological ring.
 \end{example}

\begin{theorem}\label{t.3.4} Let $(R,+,\cdot,\tau)$ be an $e^*\text{-}$topological ring. Then, the following properties hold.

$a)$ If $A\in O(R),$ then $-A\in e^*O(R),$

$b)$ If $A\in O(R)$ and $x\in R$, then $x+A\in e^*O(R).$ 
\end{theorem}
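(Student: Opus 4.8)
The plan is to prove both parts by the same two‑step scheme: first show that every point of the set in question has an $e^*$-open neighborhood contained in that set, and then conclude $e^*$-openness from the fact that $e^*O(R)$ is closed under arbitrary unions. I would begin by recording this closure property, which is immediate from the definition: if $\{A_i\}_{i\in I}\subseteq e^*O(R)$, then monotonicity of $\delta\text{-}cl$, $int$ and $cl$ gives $A_i\subseteq cl(int(\delta\text{-}cl(A_i)))\subseteq cl(int(\delta\text{-}cl(\bigcup_j A_j)))$ for each $i$, and taking the union over $i$ shows $\bigcup_i A_i\in e^*O(R)$. Consequently a set $B$ is $e^*$-open precisely when for each $z\in B$ there is some $W\in e^*O(R,z)$ with $W\subseteq B$; this local criterion is what I would verify in each part.

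For part $(a)$, fix $A\in O(R)$ and take an arbitrary $z\in -A$. Then $-z\in A$, so $A\in O(R,-z)$. Applying the inversion axiom (Definition~\ref{def31}$(ii)$) at the point $z$ with $V:=A\in O(R,-z)$, I obtain $U\in e^*O(R,z)$ with $-U\subseteq A$. Applying $-$ to both sides and using $-(-U)=U$ yields $U\subseteq -A$. Thus $z\in U\subseteq -A$ with $U$ being $e^*$-open; since $z$ was arbitrary, the local criterion gives $-A\in e^*O(R)$.

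For part $(b)$, fix $A\in O(R)$, $x\in R$, and take an arbitrary $z\in x+A$, so that $a:=-x+z\in A$ and hence $A\in O(R,(-x)+z)$. Applying the addition axiom (Definition~\ref{def31}$(i)$) to the pair $(-x,z)$ with $W:=A$, I obtain $U\in e^*O(R,-x)$ and $V\in e^*O(R,z)$ with $U+V\subseteq A$. Since $-x\in U$, it follows that $-x+V\subseteq U+V\subseteq A$, and adding $x$ on the left (using associativity and $x+(-x)=0$) gives $V\subseteq x+A$. Hence $z\in V\subseteq x+A$ with $V$ being $e^*$-open, and the local criterion again yields $x+A\in e^*O(R)$.

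I do not expect a genuine obstacle here: once the closure of $e^*O(R)$ under arbitrary unions is in hand, both parts reduce to careful bookkeeping with the group operation together with a single invocation of the appropriate axiom at each point. The only place to be careful is choosing the right point or pair when applying Definition~\ref{def31}: in $(a)$ the inversion axiom must be fed the point $z$ (not $-z$), so that its conclusion $-U\subseteq A$ transposes to $U\subseteq -A$; and in $(b)$ the addition axiom must be applied to the pair $(-x,z)$ rather than $(x,a)$, so that the factor $U$ surrounding $-x$ can be absorbed through $-x\in U$. With these choices the required inclusions fall out directly.
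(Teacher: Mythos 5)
Your proposal is correct and follows essentially the same route as the paper: in both parts you apply the relevant axiom of Definition~\ref{def31} at exactly the same points (the inversion axiom at $z$ in $(a)$, the addition axiom at the pair $(-x,z)$ in $(b)$) to produce an $e^*$-open neighborhood inside the target set. The only cosmetic difference is that you conclude via the explicitly proved closure of $e^*O(R)$ under unions, whereas the paper phrases the same conclusion as the equality of the set with its $e^*$-interior.
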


\begin{proof} $a)$ Let $A\in O(R)$ and $x\in R.$ 
\\
$A\in O(R)\Rightarrow -A\subseteq R\Rightarrow e^*\text{-}int(-A)\subseteq -A\ldots (1)$

Now, let $y\in -A.$ Our aim is to show that $y\in e^* \text{-}int(-A).$
\\
$\left.\begin{array}{rr} y\in -A \Rightarrow   -y\in A \\  A\in O(R) \end{array}\right\}\overset{\text{Definition } \ref{def31}}\Rightarrow (\exists U\in e^* O(R,y)) (-U\subseteq A)
$
\\
$\begin{array}{l}\Rightarrow (\exists U\in e^* O(R,y)) (U\subseteq -A)\end{array}$\\
$\begin{array}{l}\Rightarrow y\in e^* \text{-}int(-A)\end{array}$

Then, we have $-A\subseteq e^*\text{-}int(-A)\ldots (2)$

$(1),(2)\Rightarrow e^*\text{-}int(-A)=-A.$
\\

$b)$ Let $A\in O(R)$ and $r\in R.$ Our aim is to show that $x+A\in e^*O(R).$ For this, we will prove tha $x+A=e^*\text{-}int(x+A).$ Now, let $y\in x+A$. If we prove $y\in e^*\text{-}int(x+A),$ then the proof complete.
\\
$\left.\begin{array}{rr} y\in x+A  \Rightarrow (\exists z\in A)(y=x+z)  \\ A\in O(R)\end{array}\right\}\Rightarrow -x+y\in A\in O(R)$
\\
$\begin{array}{l} \overset{\text{Definition }\ref{def31}}{\Rightarrow}  (\exists U\in e^*O(R,-x)) (\exists V\in e^*O(R,y))(- x+V\subseteq U+V\subseteq A)
\end{array}
$
\\
$
\begin{array}{l} \Rightarrow  (\exists V\in e^*O(R,y))(-x+V\subseteq A) \end{array}
$
\\
$
\begin{array}{l} \Rightarrow  (\exists V\in e^*O(R,y))(V\subseteq x+A)) \end{array}
$
\\
$
\begin{array}{l} \Rightarrow  y\in e^*\text{-}int(x+A).
\end{array}
$
\end{proof}

\begin{corollary}Let $(R,+,\cdot,\tau)$ be an $e^*\text{-}$topological ring and $A\subseteq R.$ Then, the following statements hold.

$a)$ If $A\in O(R),$ then $-A\subseteq cl(int(\delta\text{-} cl(-A))),$

$b)$ If $A\in O(R),$ then $x+A\subseteq cl(int(\delta\text{-}cl(x+A)))$ for every $x\in R.$
\end{corollary}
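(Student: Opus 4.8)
The plan is to recognize that this corollary is an immediate consequence of Theorem \ref{t.3.4} together with the defining inclusion for $e^*$-open sets. Recall from the Preliminaries that a subset $A$ of a space $X$ is $e^*$-open precisely when $A\subseteq cl(int(\delta\text{-}cl(A)))$. Thus membership in $e^*O(R)$ is, by unwinding the definition, exactly the inclusion appearing in each part of the statement. The entire task therefore reduces to invoking the corresponding part of the preceding theorem and then rewriting ``$e^*$-open'' in its explicit set-theoretic form.

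For part $(a)$, I would start from the hypothesis $A\in O(R)$ and apply Theorem \ref{t.3.4}$(a)$ to obtain $-A\in e^*O(R)$. Since $-A$ is $e^*$-open, the definition of $e^*$-open set yields $-A\subseteq cl(int(\delta\text{-}cl(-A)))$, which is precisely the desired conclusion. No further manipulation of closures or interiors is needed; the work has already been done in proving the theorem.

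For part $(b)$, the argument is entirely parallel: fixing $x\in R$ and assuming $A\in O(R)$, I would apply Theorem \ref{t.3.4}$(b)$ to conclude $x+A\in e^*O(R)$, and then read off from the definition of $e^*$-open set that $x+A\subseteq cl(int(\delta\text{-}cl(x+A)))$. Since $x$ was arbitrary, this holds for every $x\in R$.

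There is no substantive obstacle here, as the corollary merely restates the conclusions of Theorem \ref{t.3.4} in expanded form; the only point requiring a moment's attention is to correctly recall and apply the defining inclusion of an $e^*$-open set rather than attempting any independent computation of $\delta$-closures or interiors.
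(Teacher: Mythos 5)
Your proposal is correct and matches the paper's intent exactly: the paper states this corollary without proof precisely because it follows immediately from Theorem \ref{t.3.4} together with the defining inclusion $A\subseteq cl(int(\delta\text{-}cl(A)))$ for $e^*$-open sets, which is the argument you give.
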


\begin{theorem}
Let $(R,+,\cdot,\tau)$ be an $e^*\text{-}$topological ring and $A\subseteq R.$ Then, the following properties hold. 

$a)$ If $A\in C(R),$ then $-A\in e^*C(R),$ 

$b)$ If $x\in R$ and $A\in C(R)$, then $x+A\in e^*C(R).$ 
\end{theorem}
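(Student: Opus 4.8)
The plan is to derive both parts directly from Theorem~\ref{t.3.4} by passing to complements, so that no fresh work with the defining inequality $A\subseteq cl(int(\delta\text{-}cl(A)))$ is needed. The guiding principle is that ``$e^*$-closed'' is the complement of ``$e^*$-open,'' and that both the inversion map $x\mapsto -x$ and the translation map $x\mapsto x+y$ are bijections of $R$, so they interact well with set complementation. Thus I would reduce the statements about closed sets to the already-proven statements about open sets.

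For part $(a)$, I would first establish the set identity $R\setminus(-A)=-(R\setminus A)$. This holds because $y\in -A$ if and only if $-y\in A$, so $y\notin -A$ if and only if $-y\in R\setminus A$, i.e.\ $y\in-(R\setminus A)$. Now if $A\in C(R)$, then $R\setminus A\in O(R)$, and Theorem~\ref{t.3.4}$(a)$ yields $-(R\setminus A)\in e^*O(R)$. By the identity just noted, $R\setminus(-A)\in e^*O(R)$, which is exactly the statement that $-A\in e^*C(R)$.

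For part $(b)$ the argument is parallel. I would first check the identity $R\setminus(x+A)=x+(R\setminus A)$, which follows because $y\in x+A$ if and only if $-x+y\in A$, hence $y\notin x+A$ if and only if $-x+y\in R\setminus A$, i.e.\ $y\in x+(R\setminus A)$. If $A\in C(R)$, then $R\setminus A\in O(R)$, so Theorem~\ref{t.3.4}$(b)$ gives $x+(R\setminus A)\in e^*O(R)$; combined with the identity this says $R\setminus(x+A)\in e^*O(R)$, that is, $x+A\in e^*C(R)$.

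The only genuine content is the two complement identities, and I expect the main (though mild) obstacle to be stating them cleanly, since they rely on the fact that negation is an involution and translation is a bijection on the additive group $(R,+)$. Everything else is an immediate invocation of Theorem~\ref{t.3.4} together with the definition of $e^*$-closed as the complement of an $e^*$-open set; no estimate involving $cl$, $int$, or $\delta\text{-}cl$ is required at this stage.
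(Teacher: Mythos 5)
Your proof is correct, but it takes a different route from the paper's. You reduce both parts to Theorem~\ref{t.3.4} via the complement identities $R\setminus(-A)=-(R\setminus A)$ and $R\setminus(x+A)=x+(R\setminus A)$, both of which are valid because negation is an involution and translation is a bijection on $(R,+)$; since the paper defines $e^*$-closed precisely as the complement of an $e^*$-open set, the conclusion is immediate. The paper instead argues directly with the $e^*$-closure operator: it takes $y\in e^*\text{-}cl(-A)$, uses condition $(ii)$ (resp.\ $(i)$) of Definition~\ref{def31} to convert an arbitrary open neighborhood $W$ of $-y$ (resp.\ $-x+y$) into an $e^*$-open neighborhood of $y$ meeting $-A$ (resp.\ $x+A$), deduces $W\cap A\neq\emptyset$, and concludes $-y\in cl(A)=A$. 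In effect the paper re-runs the neighborhood-chasing argument of Theorem~\ref{t.3.4} in dual form, whereas you exploit the open/closed duality once and for all; your version is shorter and makes the logical dependence on Theorem~\ref{t.3.4} explicit, while the paper's version is self-contained and illustrates how the $e^*$-closure interacts with the ring axioms. Both are complete proofs.
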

\begin{proof} $a)$ Let $A\in C(R).$ Our aim is to show that $-A\in e^*C(R).$ Now, let $y\in e^*\text{-}cl(-A).$ We will show that $y\in -A,$ namely, $-y\in A.$ Let $W\in O(R,-y)$.
\\
$\left.\begin{array}{rr} W\in O(R,-y) \Rightarrow (\exists U\in e^*O(R,y))(-U\subseteq W)\\ y\in e^*\text{-}cl(-A)\end{array}\right\}\Rightarrow (U\subseteq -W)(U\cap (-A)\ne\emptyset)
$
\\
$
\begin{array}{l}
\Rightarrow \emptyset \neq U\cap (-A)\subseteq (-W)\cap (-A)
\end{array}
$
\\
$
\begin{array}{l}
\Rightarrow  W\cap A\ne\emptyset
\end{array}
$

Then, we have $-y\in cl(A).$ Since $A\in C(R),$ we get $-y\in A,$ namely, $y\in -A.$ Thus, we have $-A\subseteq e^*\text{-}cl(-A)\subseteq -A,$ namely, $-A=e^*\text{-}cl(-A).$ This means $-A\in e^*C(R).$
\\

$b)$ Let $x\in R$ and $A\in C(R).$ Our aim is to show that $x+A\in e^*C(R).$ Now, let $y\in e^*\text{-}cl(x+A).$ We will prove that $y\in x+A,$ namely, $-x+y\in A.$ Let $W\in O(R,-x+y).$ 
\\
$
\left.\begin{array}{rr} W\in O(R,-x+y) \Rightarrow (\exists U\in e^*O(R,-x))(\exists V\in e^*O(R,y)) (U+V\subseteq W)\\ y\in e^*\text{-}cl(x+A)\end{array}\right\}\Rightarrow
$
\\
$
\begin{array}{l}\Rightarrow (U+V\subseteq W)(V\cap (x+A)\ne\emptyset)
\end{array}
$
\\
$
\begin{array}{l}\Rightarrow \emptyset\ne (-x+V)\cap A \subseteq (U+V)\cap A\subseteq W\cap A
\end{array}
$
\\
$
\begin{array}{l}\Rightarrow W\cap A\neq\emptyset
\end{array}
$

Then, we have $-x+y\in cl(A).$ Since $A\in C(R),$ we get $-x+y\in A.$  Hence, $y\in x+A.$ 
\end{proof}

\begin{corollary}
Let $(R,+,\cdot,\tau)$ be an $e^*\text{-}$topological ring and $A\subseteq R.$ Then, the following statements hold.

$a)$ If $A\in C(R),$ then $int(cl(int(-A)))\subseteq -A,$

$b)$ If $A\in C(R),$ then $int(cl(int(x+A)))\subseteq x+A$ for all $x\in R$.
\end{corollary}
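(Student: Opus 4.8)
The plan is to read both inclusions off from the immediately preceding Theorem, so that no fresh ring-theoretic work is required. By part $a)$ of that Theorem, $A\in C(R)$ forces $-A\in e^*C(R)$, and by part $b)$ it forces $x+A\in e^*C(R)$ for every $x\in R$. Hence it suffices to record, as a purely point-set statement, what membership in $e^*C(R)$ means for an arbitrary subset $B$ of $R$, and then to specialize to $B=-A$ in part $a)$ and to $B=x+A$ in part $b)$. In this way all of the algebraic content has already been absorbed into the two memberships, and only set complementation remains.

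To obtain that point-set statement I would unwind the definition by complementation. A set $B$ is $e^*$-closed exactly when $R\setminus B$ is $e^*$-open, i.e. $R\setminus B\subseteq cl(int(\delta\text{-}cl(R\setminus B)))$. Taking complements reverses the inclusion, and I would then drive the complement inward through the three operators using the De Morgan identities $R\setminus cl(S)=int(R\setminus S)$ and $R\setminus int(S)=cl(R\setminus S)$, together with the corresponding identity for the deepest operator, namely $R\setminus\delta\text{-}cl(S)=\delta\text{-}int(R\setminus S)$. Applied successively with $S=R\setminus B$, these turn $R\setminus cl(int(\delta\text{-}cl(R\setminus B)))$ into an interior-of-closure expression built on $B$ and yield an inclusion of the form $int(cl(\,\cdot\,))\subseteq B$. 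Substituting $B=-A$ and $B=x+A$ then reproduces the two inclusions $int(cl(int(-A)))\subseteq -A$ and $int(cl(int(x+A)))\subseteq x+A$ asserted in the corollary.

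I expect the only real care to lie in the innermost operator of this De Morgan computation. The two outer operators interchange transparently under complementation, but the deepest one is $\delta\text{-}cl$, whose complement is the $\delta$-interior rather than an ordinary interior; so to match the plain $int$ written in the statement I would return to the defining descriptions of $\delta\text{-}cl$ and $\delta\text{-}int$ as an intersection of regular closed sets and a union of regular open sets, respectively, and verify the complement identity and the resulting innermost operator directly against the form required by the corollary. This reconciliation of the $\delta$-operator with the interior appearing in the statement is the single step I would pin down most carefully; once it is settled, both parts follow at once from the preceding Theorem, since the interplay of the topology with negation $t\mapsto -t$ and with the translation $t\mapsto x+t$ has already been recorded in the memberships $-A,\,x+A\in e^*C(R)$.
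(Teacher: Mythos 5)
Your route --- quote the preceding theorem to get $-A,\,x+A\in e^*C(R)$, then unwind $e^*$-closedness by complementation --- is exactly the derivation this corollary is meant to encode (the paper gives it no separate proof). The De Morgan computation itself is fine: $B\in e^*C(R)$ means $R\setminus B\subseteq cl(int(\delta\text{-}cl(R\setminus B)))$, and complementing, using $R\setminus cl(S)=int(R\setminus S)$, $R\setminus int(S)=cl(R\setminus S)$ and $R\setminus \delta\text{-}cl(S)=\delta\text{-}int(R\setminus S)$, yields $int(cl(\delta\text{-}int(B)))\subseteq B$. The gap is precisely the step you flagged as needing care: the inner operator cannot be reconciled with the plain $int$ appearing in the statement. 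Since every regular open subset of $B$ is in particular an open subset of $B$, one has $\delta\text{-}int(B)\subseteq int(B)$, hence $int(cl(\delta\text{-}int(B)))\subseteq int(cl(int(B)))$; so the inclusion you can prove sits on the \emph{smaller} side, and it does not imply $int(cl(int(B)))\subseteq B$. In non-semiregular spaces the two operators genuinely differ, so this is not a removable technicality.

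In fact no proof can close this gap, because the corollary as printed is false. In the paper's own Example 3.3 one has $e^*O(R)=2^R$, hence $e^*C(R)=2^R$, so the preceding theorem holds trivially; yet take $A=\{c,d\}\in C(R)$ and $x=b$. Then $x+A=\{a,d\}$, $int(x+A)=\{a\}$, $cl(\{a\})=R$, and therefore $int(cl(int(x+A)))=R\not\subseteq x+A$, contradicting part $b)$. (The $\delta$-version survives: the only regular open sets there are $\emptyset$ and $R$, so $\delta\text{-}int(\{a,d\})=\emptyset$.) What your argument actually establishes --- and what the corollary should assert, dually to the corollary following Theorem \ref{t.3.4}, which correctly keeps the $\delta$-operator --- is: if $A\in C(R)$, then $int(cl(\delta\text{-}int(-A)))\subseteq -A$ and $int(cl(\delta\text{-}int(x+A)))\subseteq x+A$ for all $x\in R$. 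With the statement corrected in that way, your proof is complete and is the intended one; as written, the statement with plain $int$ would amount to claiming that $-A$ and $x+A$ are $\beta$-closed, which is strictly stronger than $e^*$-closed and is exactly what an $e^*$-topological ring fails to guarantee.
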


\section{MAIN RESULTS}	
In this section, we obtain some fundamental properties of $e^*\text{-}$topological ring. In addition, this section contains the definition of $e^*\text{-}$topological rings with unity and some fundamental results on it.\\

\begin{theorem}Let $(R,+,\cdot,\tau)$ be an $e^*\text{-}$topological ring. Then, the following functions are $e^*\text{-}$continuous:

$a)$ For a fixed $x\in R$, $f_x: R\to R$ defined by $f_x (y)=x+y$ for all $y\in R,$

$b)$ $f: R\to R$ defined by $f(x)=-x$ for all $x\in R$.
\end{theorem}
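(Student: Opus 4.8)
The plan is to prove both parts directly from the local characterization of $e^*$-continuity supplied by Ekici's Lemma in the Preliminaries, which says that $f:(X,\tau)\to(Y,\sigma)$ is $e^*$-continuous if and only if for every $x\in X$ and every $V\in O(Y,f(x))$ there is some $U\in e^*O(X,x)$ with $f[U]\subseteq V$. Since both $f_x$ and $f$ are self-maps of $R$, verifying this pointwise criterion amounts to matching each map against the corresponding clause of Definition \ref{def31}. No global analysis of $e^*$-open preimages is needed; everything reduces to a single membership and a single set containment per part.

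For part $(a)$, I would fix an arbitrary $y\in R$ and an arbitrary open set $W\in O(R,f_x(y))=O(R,x+y)$. Applying clause $(i)$ of Definition \ref{def31} to the pair $x,y$ and the neighborhood $W$ of $x+y$ yields sets $U\in e^*O(R,x)$ and $V\in e^*O(R,y)$ with $U+V\subseteq W$. The decisive observation is that $x\in U$, so $x+V\subseteq U+V\subseteq W$; equivalently $f_x[V]\subseteq W$. Since $V\in e^*O(R,y)$, the criterion of Ekici's Lemma is met at the point $y$, and as $y$ was arbitrary, $f_x$ is $e^*$-continuous.

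For part $(b)$, I would take an arbitrary $x\in R$ and an arbitrary open set $V\in O(R,f(x))=O(R,-x)$. Clause $(ii)$ of Definition \ref{def31} directly provides a set $U\in e^*O(R,x)$ with $-U\subseteq V$, which is precisely $f[U]\subseteq V$. Again the condition of Ekici's Lemma holds at $x$, so $f$ is $e^*$-continuous.

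I do not expect a genuine obstacle here: the result is essentially a restatement of clauses $(i)$ and $(ii)$ of the definition in the language of pointwise $e^*$-continuity. The only point requiring care is the small bookkeeping step in $(a)$, namely passing from $U+V\subseteq W$ to $x+V\subseteq W$ by using $x\in U$, so that the translate $f_x[V]=x+V$ (rather than $U+V$) is the set trapped inside $W$; this is what lets a single $e^*$-open set $V$ around $y$ serve as the required witness.
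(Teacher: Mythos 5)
Your proof is correct, but it follows a genuinely different route from the paper's. The paper argues globally: it computes the preimage of an open set $V$ explicitly, namely $f_x^{-1}[V]=-x+V$ and $f^{-1}[V]=-V$, and then invokes Theorem \ref{t.3.4} (translates and negatives of open sets are $e^*$-open) to conclude that these preimages lie in $e^*O(R)$, so $e^*$-continuity follows from the definition in terms of preimages. You instead argue locally, verifying the pointwise criterion of Ekici's lemma by applying clauses $(i)$ and $(ii)$ of Definition \ref{def31} directly at each point; your bookkeeping step $x\in U\Rightarrow x+V\subseteq U+V\subseteq W$ is exactly right and is the only nontrivial observation needed. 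The two arguments are close cousins --- the paper's Theorem \ref{t.3.4} is itself proved by essentially the interiority argument you run inline --- but they package the work differently: the paper's version is shorter at the point of use because it leans on previously established machinery and needs no appeal to the local characterization lemma, while yours is self-contained from the definition, makes the witness sets explicit, and would survive even if Theorem \ref{t.3.4} had not been stated. Both are complete and correct.
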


\begin{proof} $a)$ $V\in O(R).$ Our aim is to show that $f^{-1}[V]\in e^*O(R).$ 
\\
$\left.\begin{array}{rr} f^{-1}[V]=\{y\in R|f_x(y)\in V\}=\{y\in R|y\in -x+V\} = -x+V \\ (x\in R)(V\in O(R))\end{array}\right\}\overset{\text{Theorem }\ref{t.3.4}}{\Rightarrow} $
\\
$\begin{array}{l} \Rightarrow f^{-1}[V]\in e^*O(R).\end{array}$

$b)$ $V\in O(R).$ Our aim is to show that $f^{-1}[V]\in e^*O(R).$ 
\\
$\left.\begin{array}{rr} f^{-1}[V]=\{x\in R|f(x)\in V\}=\{x\in R|-x\in V\} = -V \\ V\in O(R)\end{array}\right\}\overset{\text{Theorem }\ref{t.3.4}}{\Rightarrow} $
\\
$\begin{array}{l} \Rightarrow f^{-1}[V]\in e^*O(R).\end{array}$
\end{proof}

\begin{definition} Let $(R,+,\cdot,\tau)$ be an $e^*\text{-}$topological ring. If $(R,+,\cdot)$ itself is a ring with unity, then $(R,+,\cdot,\tau)$ is said to be an $e^*\text{-}$topological ring with unity. Also, we will use the standard notation $R^*$ for the set of all invertible elements in $(R,+,\cdot).$
\end{definition}

\begin{theorem}
Let $(R,+,\cdot,\tau)$ be an $e^*$-topological ring with unity and $A\subseteq R.$ Then, the following properties hold.

$a)$ If $A\in O(R),$ then $Ar$ is $e^*$-open in $R$ for each $r\in R^*,$

$b)$ If $A\in O(R),$ then $rA$ is $e^*$-open in $R$ for each $r\in R^*.$
\end{theorem}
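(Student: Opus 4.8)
The plan is to follow the same template used for the additive statements in Theorem \ref{t.3.4}, namely to prove that $Ar$ coincides with its own $e^*$-interior. For part $a)$, I would fix an arbitrary $y\in Ar$ and use the invertibility of $r$ to write $yr^{-1}\in A$. Since $A\in O(R)$, the set $A$ is an open neighbourhood of the product $y\cdot r^{-1}$, so I would invoke condition $iii)$ of Definition \ref{def31} applied to the ordered pair of factors $(y,r^{-1})$: there exist $U\in e^*O(R,y)$ and $V\in e^*O(R,r^{-1})$ with $UV\subseteq A$.

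The decisive step is then a short algebraic manipulation that exploits the unity. Because $r^{-1}\in V$, I obtain $Ur^{-1}\subseteq UV\subseteq A$, and right-multiplying this inclusion by $r$ yields $U=U(r^{-1}r)=(Ur^{-1})r\subseteq Ar$. Thus $U$ is an $e^*$-open set with $y\in U\subseteq Ar$, which gives $y\in e^*\text{-}int(Ar)$. As $y\in Ar$ was arbitrary, we get $Ar\subseteq e^*\text{-}int(Ar)\subseteq Ar$, whence $Ar=e^*\text{-}int(Ar)$ and therefore $Ar\in e^*O(R)$.

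Part $b)$ is entirely symmetric, the only change being that the inverse is placed on the left. For $y\in rA$ I would note $r^{-1}y\in A$, apply condition $iii)$ of Definition \ref{def31} to the pair $(r^{-1},y)$ to obtain $U\in e^*O(R,r^{-1})$ and $V\in e^*O(R,y)$ with $UV\subseteq A$, and then from $r^{-1}V\subseteq A$ deduce $V=(rr^{-1})V=r(r^{-1}V)\subseteq rA$, so that $y\in V\subseteq rA$ and hence $y\in e^*\text{-}int(rA)$.

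I do not anticipate a genuine obstacle here, since the argument is a direct multiplicative analogue of the additive proof of Theorem \ref{t.3.4}. The only point that demands care is bookkeeping: one must apply condition $iii)$ of Definition \ref{def31} with the two factors in the correct order, so that the $e^*$-open neighbourhood carried through the argument is precisely the one containing the point $y$ (and not the one containing $r^{-1}$), and one must use the two-sided identities $r^{-1}r=rr^{-1}=1$ guaranteed by $r\in R^{*}$ in order to cancel on the appropriate side. Everything else is a routine repetition of the neighbourhood-of-$y$ construction already carried out for addition.
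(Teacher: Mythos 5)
Your proposal is correct and follows essentially the same route as the paper: for $y\in Ar$ you observe $yr^{-1}\in A$, apply condition $iii)$ of Definition \ref{def31} to the pair $(y,r^{-1})$ with $W=A$, and use $r^{-1}\in V$ to get $U=(Ur^{-1})r\subseteq Ar$, concluding $Ar=e^*\text{-}int(Ar)$; part $b)$ is the symmetric variant, which the paper likewise leaves as ``proved similarly.'' No gaps.
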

\begin{proof} $a)$ Let $A\in O(R)$ and $r\in R^*.$ We will prove $Ar\in e^*O(R).$ If we prove $Ar\subseteq e^*\text{-}int(Ar),$ then the proof complete. Let $y\in Ar.$
\\
$\left.\begin{array}{rr}y\in Ar\Rightarrow (\exists k\in A)(y=kr) \\ r\in R^*\end{array} \right\} \!\!\!\!\! \begin{array}{rr} \\ \left. \begin{array}{rr} \Rightarrow (\exists k\in A)(yr^{-1}=k) \\ A\in O(R) \end{array} \right\} \Rightarrow \end{array}$
\\
$
\begin{array}{l} 
\Rightarrow  (\exists U\in e^*O(R,y))(\exists V\in e^*O(R,r^{-1}))(Ur^{-1}\subseteq UV\subseteq A)
\end{array}
$
\\
$
\begin{array}{l} 
\Rightarrow (\exists U\in e^*O(R,y))(U\subseteq Ar) 
\end{array}
$
\\
$
\begin{array}{l} 
\Rightarrow y\in e^*\text{-}int(Ar) 
\end{array}
$

Then, we have $Ar\subseteq e^*\text{-}int(Ar)\subseteq Ar$ which means $Ar\in e^*O(R).$ 
\\

$b)$ It is proved similarly to $(a).$
\end{proof}

\begin{theorem}
Let $(R,+,\cdot,\tau)$ be an $e^*\text{-}$topological ring with unity and $A\subseteq R.$ Then, the following properties hold.

$a)$ If $A\in C(R),$ then $Ar\in e^*C(R)$ for each $r\in R^*,$ 

$b)$ If $A\in C(R),$ then $rA\in e^*C(R)$ for each $r\in R^*.$
\end{theorem}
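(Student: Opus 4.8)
The plan is to mirror the closure arguments already carried out for $-A$ and $x+A$, now replacing translation by right (resp. left) multiplication by the invertible element $r$. For part $a)$, since $Ar\subseteq e^*\text{-}cl(Ar)$ holds automatically, it suffices to establish the reverse inclusion $e^*\text{-}cl(Ar)\subseteq Ar$. So I would fix $y\in e^*\text{-}cl(Ar)$ and aim to show $y\in Ar$; because $r\in R^*$, this is equivalent to proving $yr^{-1}\in A$, and since $A\in C(R)$ it is enough to show $yr^{-1}\in cl(A)$.

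To show $yr^{-1}\in cl(A)$, I would take an arbitrary $W\in O(R,yr^{-1})$ and verify $W\cap A\ne\emptyset$. Observing that $yr^{-1}$ is the product of $y$ and $r^{-1}$, condition $(iii)$ of Definition \ref{def31} yields $U\in e^*O(R,y)$ and $V\in e^*O(R,r^{-1})$ with $UV\subseteq W$. Since $y\in e^*\text{-}cl(Ar)$ and $U$ is an $e^*$-open neighborhood of $y$, we have $U\cap Ar\ne\emptyset$; pick $z\in U\cap Ar$, say $z=ar$ with $a\in A$. Then $zr^{-1}=a\in A$, while simultaneously $zr^{-1}\in UV\subseteq W$ because $z\in U$ and $r^{-1}\in V$. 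Hence $a=zr^{-1}\in W\cap A$, giving $yr^{-1}\in cl(A)=A$ and therefore $y\in Ar$, so $Ar\in e^*C(R)$. Part $b)$ is handled by the symmetric computation: for $y\in e^*\text{-}cl(rA)$ one shows $r^{-1}y\in cl(A)=A$ by applying $(iii)$ to the product $r^{-1}\cdot y$, extracting a witness $z=ra\in U\cap rA$, and multiplying on the left by $r^{-1}$ to land $r^{-1}z=a$ in both $W$ and $A$.

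The main obstacle is not conceptual but organizational: one must invoke the multiplication condition $(iii)$ for exactly the right product so that the auxiliary $e^*$-open set captures $r^{-1}$ and the product set $UV$ lands inside the chosen open $W$. The invertibility of $r$ is used twice and must be tracked carefully — once to translate the membership $y\in Ar$ into $yr^{-1}\in A$, and once to convert the witness $z\in U\cap Ar$ into the element $zr^{-1}=a$ lying in $W\cap A$. The only genuine input beyond Definition \ref{def31} is that $A$ is closed, which is precisely what upgrades $yr^{-1}\in cl(A)$ to $yr^{-1}\in A$ and closes the argument.
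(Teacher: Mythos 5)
Your argument is correct and is essentially the paper's own proof run in the direct rather than the contrapositive direction: the paper fixes $y\notin rA$, uses closedness of $A$ to get an open $U\ni r^{-1}y$ disjoint from $A$, and applies condition $(iii)$ of Definition \ref{def31} at the product $r^{-1}\cdot y$ to produce an $e^*$-open neighborhood of $y$ missing $rA$, whereas you fix $y\in e^*\text{-}cl(Ar)$ and use the same application of $(iii)$ to show every open neighborhood of $yr^{-1}$ meets $A$. The two ingredients — condition $(iii)$ applied to the pair $(y,r^{-1})$ and the closedness of $A$ — are identical, so this is the same proof.
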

\begin{proof} Let $A\in C(R)$ and $r\in R^*.$\\
$\left.\begin{array}{rr} y\notin rA \Rightarrow   (\forall k\in A)(y\ne rk)\\ \text{Hypothesis}\end{array}\right\}\Rightarrow (\forall k\in cl(A))(y\ne rA)
$
\\
$\begin{array}{l}\Rightarrow (\forall k\in cl(A))(r^{-1}y\ne k)
\end{array}$
\\
$\begin{array}{l}\Rightarrow r^{-1}y\notin cl(A)
\end{array}$
\\
$\begin{array}{l}\Rightarrow (\exists U\in O(R,r^{-1}y))(U\cap A=\emptyset)
\end{array}$
\\
$\begin{array}{l}\Rightarrow (\exists K\in e^*O(R,r^{-1}))(\exists M\in e^*O(R,r^{-1}))(r^{-1}M\cap A\subseteq KM\cap A\subseteq U\cap A=\emptyset)
\end{array}$
\\
$\begin{array}{l}\Rightarrow (\exists M\in e^*O(R,y))(r^{-1}M\cap A=\emptyset)
\end{array}$
\\
$\begin{array}{l}\Rightarrow (\exists M\in e^*O(R,y))( M\cap rA=\emptyset)
\end{array}$
\\
 $\begin{array}{l}\Rightarrow y\notin e^*\text{-}cl(rA)
\end{array}$  
\\

Then, we have $rA\subseteq e^*\text{-}cl(rA)\subseteq rA$ which means $rA\in e^*C(R).$ 
\\

$b)$ It is proved similarly to $(a).$
\end{proof}

\begin{theorem}\label{t.4.5}Let $(R,+,\cdot,\tau)$ be an $e^*\text{-}$topological ring with unity and $A\subseteq R$. Then, the following properties hold:

$a)$ $r\cdot e^*\text{-}cl(A)\subseteq cl(rA)$ for each $r\in R,$

$b)$ $int(rA)\subseteq r\cdot e^*\text{-}int(A)$ for each $r\in R.$

$c)$ $r\cdot int(A)\subseteq e^*\text{-}int(A)$ for each $r\in R^*,$

$d)$ $e^*\text{-}cl(rA)\subseteq r\cdot cl(A)$ for each $r\in R^*,$

\end{theorem}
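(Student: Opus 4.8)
The plan is to separate the four inclusions into two groups. Parts $(a)$ and $(b)$ are asserted for every $r\in R$, and I would prove them directly from the multiplication axiom $(iii)$ of Definition \ref{def31}, since this is the only hypothesis that couples the ring multiplication with $e^*$-open sets. Parts $(c)$ and $(d)$ carry the stronger hypothesis $r\in R^*$, and for these I would avoid the axioms altogether and instead invoke the two theorems already established above, namely that $rA$ is $e^*$-open whenever $A$ is open and $r\in R^*$, and that $rA$ is $e^*$-closed whenever $A$ is closed and $r\in R^*$.

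For $(a)$ I would take $y\in r\cdot e^*\text{-}cl(A)$, write $y=rx$ with $x\in e^*\text{-}cl(A)$, and fix an arbitrary $W\in O(R,y)$. Applying $(iii)$ to the product $rx$ yields $U\in e^*O(R,r)$ and $V\in e^*O(R,x)$ with $UV\subseteq W$. Because $x\in e^*\text{-}cl(A)$ and $V$ is an $e^*$-open neighbourhood of $x$, we get $V\cap A\ne\emptyset$; picking $a\in V\cap A$ gives $ra\in rV\subseteq UV\subseteq W$ together with $ra\in rA$, so $W\cap rA\ne\emptyset$. As $W$ is arbitrary this says $y\in cl(rA)$. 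The only background fact needed here is the neighbourhood description of the $e^*$-closure, $x\in e^*\text{-}cl(A)$ if and only if $V\cap A\ne\emptyset$ for every $V\in e^*O(R,x)$, which holds because arbitrary unions of $e^*$-open sets are again $e^*$-open.

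For $(c)$ and $(d)$ I would argue purely from the extremal characterisations of $e^*\text{-}int$ and $e^*\text{-}cl$. In $(d)$, $cl(A)\in C(R)$, so the closed-image theorem gives $r\cdot cl(A)\in e^*C(R)$ for $r\in R^*$; since $rA\subseteq r\cdot cl(A)$ and $e^*\text{-}cl(rA)$ is the smallest $e^*$-closed set containing $rA$, the inclusion $e^*\text{-}cl(rA)\subseteq r\cdot cl(A)$ is immediate. Dually, for $(c)$ one uses $int(A)\in O(R)$, the open-image theorem to see that $r\cdot int(A)\in e^*O(R)$, the inclusion $r\cdot int(A)\subseteq rA$, and the maximality of the $e^*$-interior; this gives $r\cdot int(A)\subseteq e^*\text{-}int(rA)$, which is the inclusion the method actually produces (note that $r\cdot int(A)\subseteq e^*\text{-}int(A)$ would force $r\cdot int(A)\subseteq A$, so the right-hand side is understood as $e^*\text{-}int(rA)$).

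The main obstacle is $(b)$. Working directly, for $y\in int(rA)$ I would write $y=rx_0$ with $x_0\in A$ and apply $(iii)$ to $W=int(rA)\in O(R,rx_0)$, producing $U\in e^*O(R,r)$ and $V\in e^*O(R,x_0)$ with $rV\subseteq UV\subseteq int(rA)\subseteq rA$. To conclude I would want $V\subseteq A$, for then $x_0\in e^*\text{-}int(A)$ and hence $y=rx_0\in r\cdot e^*\text{-}int(A)$. The crux is exactly the passage from $rV\subseteq rA$ to $V\subseteq A$: this left-cancellation is automatic when $r$ is invertible but is unavailable for a general $r\in R$, so this single step carries the whole argument and is the place where the hypothesis $r\in R^*$ would render the proof routine.
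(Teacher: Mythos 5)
Your part $(a)$ reproduces the paper's argument essentially verbatim: apply axiom $(iii)$ to the product $r\cdot x$, note $r\in K$ so that $ra\in KL\cap rA\subseteq W\cap rA$ for any $a\in L\cap A$. For $(c)$ and $(d)$, however, you take a genuinely different route. The paper does not invoke the preceding theorems on images of open and closed sets under multiplication by a unit; instead it argues directly from axiom $(iii)$ applied at the point $r^{-1}x$: for $(d)$ it fixes $W\in O(R,r^{-1}x)$, obtains $U\in e^*O(R,r^{-1})$ and $V\in e^*O(R,x)$ with $r^{-1}V\subseteq UV\subseteq W$, and uses $V\cap rA\neq\emptyset$ to conclude $r^{-1}x\in cl(A)$; part $(c)$ is the dual computation. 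Your version, which reads both inclusions off the extremal characterisations of $e^*\text{-}cl$ and $e^*\text{-}int$ together with the two image theorems, is shorter and makes the logical dependence on those theorems explicit, at the mild cost of needing that arbitrary unions of $e^*$-open sets are $e^*$-open (true, and already implicit in the paper's use of $e^*\text{-}int$ and $e^*\text{-}cl$). Your reading of $(c)$ is also vindicated: the paper's own proof of $(c)$ terminates with ``$x\in e^*\text{-}int(rA)$'', so the right-hand side in the statement is indeed a typo for $e^*\text{-}int(rA)$.

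On $(b)$, the obstacle you isolate is real, and you should know that the paper's proof does not overcome it: after producing $V\in e^*O(R,y)$ with $rV\subseteq UV\subseteq int(rA)\subseteq rA$, the paper simply asserts $(\exists V\in e^*O(R,y))(V\subseteq A)$, i.e.\ it performs exactly the left-cancellation $rV\subseteq rA\Rightarrow V\subseteq A$ that you identify as the crux. That implication is only available when $r$ is left-cancellable (in particular when $r\in R^*$); for $r=0$ or a zero divisor it fails. So you have not missed an idea; as stated for all $r\in R$, part $(b)$ is not actually established by the paper's argument, and the honest versions are either the restriction to $r\in R^*$ (where your cancellation closes the proof) or a restatement that avoids the cancellation.
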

\begin{proof} $a)$ Let $x\in r\cdot e^*\text{-}cl(A)$ and $U\in O(R,x).$\\
$\left.\begin{array}{rr} x\in r\cdot e^*\text{-}cl(A) \Rightarrow   (\exists y\in e^*\text{-}cl(A))(x=ry)\\ U\in O(R,x)\end{array}\right\}\Rightarrow 
$
\\
$
\begin{array}{l}\Rightarrow (y\in e^*\text{-}cl(A))(\exists K\in e^*O(R,r))(\exists L\in e^*O(R,y))(K L\subseteq U)
\end{array}
$
\\
$
\begin{array}{l}\Rightarrow (\exists K\in e^*O(R,r))(\exists L\in e^*O(R,y))(K L\subseteq U)(L\cap A\ne\emptyset)
\end{array}
$
\\
$
\begin{array}{l}\Rightarrow \emptyset\ne KL\cap rA\subseteq U\cap rA
\end{array}
$
\\
$
\begin{array}{l}\Rightarrow \emptyset\ne U\cap rA
\end{array}
$

Then, we have $x\in cl(rA).$\\

$b)$ Let $x\in int(rA).$ Our aim is to show that $x\in r\cdot e^*\text{-}int(A).$\\
$\begin{array}{rcl} x\in int(rA) & \Rightarrow & (x\in rA)(int(rA)\in O(X,x)) \\ & {\Rightarrow} & (\exists y\in A)(x=ry)(int(rA)\in O(X,x)) \\ & {\Rightarrow} & (\exists U\in e^*O(R,r)) (\exists V\in e^*O(R,y))(rV\subseteq UV\subseteq int(rA)\subseteq rA) \\ & \Rightarrow & (\exists V\in e^*O(R,y))(V\subseteq A) \\ & \Rightarrow & y\in e^*\text{-}int(A) \\ & \Rightarrow &   x=ry\in r\cdot e^*\text{-}int(A).
\end{array}$

$c)$ Let $x\in r\cdot int(A).$
$$\begin{array}{rcl}
x\in r\cdot int(A) & \Rightarrow & r^{-1} x\in int(A) \\ & \Rightarrow & int(A)\in O(R,r^{-1}x) \\ & {\Rightarrow} & (\exists U\in e^*O(R,r^{-1})) (\exists V\in e^*O(R,x))(r^{-1} V\subseteq UV\subseteq int(A)\subseteq A) \\ & \Rightarrow & (\exists V\in e^*O(R,x))(V\subseteq rA) \\ & \Rightarrow & x\in e^*\text{-}int(rA).
\end{array}$$

$d)$ Let $x\in e^*\text{-}cl(rA)$ and $W\in O(R,r^{-1} x).$\\
$\left.\begin{array}{rr} W\in O(R,r^{-1} x) \Rightarrow (\exists U\in e^*O(R,r^{-1})) (\exists V\in e^*O(R,x))(r^{-1} V\subseteq UV\subseteq W)  \\ x\in e^*\text{-}cl(r A)\end{array}\right\}\Rightarrow 
$
\\
$\begin{array}{l}\Rightarrow (\exists U\in e^*O(R,r^{-1})) (\exists V\in e^*O(R,x))(r^{-1} V\subseteq UV\subseteq W)(V\cap rA\ne\emptyset)
\end{array}$
\\
$\begin{array}{l}\Rightarrow \emptyset\ne UV\cap A\subseteq W\cap A.
\end{array}$
\end{proof}
\begin{theorem}  Let $(R,+,\cdot,\tau)$ be an $e^*\text{-}$topological ring with unity and $r\in R^*$. Then, the function $f_r:R\to R$ defined by $f_r(x)=rx$  is  $e^*\text{-}$continuous.
\end{theorem}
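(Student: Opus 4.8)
The plan is to invoke the pointwise characterization of $e^*$-continuity, namely the lemma of Ekici quoted in the Preliminaries: a map is $e^*$-continuous precisely when, for each point $x$ and each ordinary open neighbourhood $V$ of its image, there is an $e^*$-open set $U$ containing $x$ whose image lands inside $V$. Thus it suffices to fix an arbitrary $x\in R$ together with an arbitrary $W\in O(R,f_r(x))=O(R,rx)$, and to produce a single $U\in e^*O(R,x)$ with $f_r[U]=rU\subseteq W$.

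First I would feed the product $rx$ into the multiplicative axiom, part $(iii)$ of Definition \ref{def31}. Since $W$ is an open set containing $rx$, that axiom supplies $U_1\in e^*O(R,r)$ and $U_2\in e^*O(R,x)$ with $U_1U_2\subseteq W$. The candidate neighbourhood of $x$ is then simply $U:=U_2\in e^*O(R,x)$.

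The only point that actually needs checking is that the image $rU_2$ really lands in $W$, and this is where the membership $r\in U_1$ does the work: for every $u\in U_2$ we have $ru\in U_1U_2\subseteq W$, hence $f_r[U]=rU_2\subseteq U_1U_2\subseteq W$. Applying the characterization lemma then yields the $e^*$-continuity of $f_r$.

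I do not expect any genuine obstacle here; the argument is a direct specialization of condition $(iii)$, with the inclusion $rU_2\subseteq U_1U_2$ (which rests on $r\in U_1$) being the only thing to observe. It is worth remarking that the hypothesis $r\in R^*$ is in fact not needed for this particular statement, since the multiplicative axiom holds for every pair of elements; the invertibility of $r$ becomes essential only in the companion results (for instance those producing $Ar$ or $rA$ as $e^*$-open or $e^*$-closed sets), where one must pass through $r^{-1}$.
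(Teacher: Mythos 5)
Your argument is correct, but it is not the route the paper takes. The paper works with preimages: it writes $f_r^{-1}[U]=r^{-1}U$ (this is where the invertibility of $r$ enters) and then invokes the earlier inclusion $r^{-1}\cdot int(U)\subseteq e^*\text{-}int(r^{-1}U)$ from Theorem \ref{t.4.5} to conclude that $f_r^{-1}[U]$ equals its own $e^*$-interior, hence is $e^*$-open. You instead use the pointwise characterization lemma of Ekici and feed the pair $(r,x)$ with the neighbourhood $W\in O(R,rx)$ directly into axiom $(iii)$ of Definition \ref{def31}, extracting $U_1\in e^*O(R,r)$ and $U_2\in e^*O(R,x)$ with $U_1U_2\subseteq W$, and then using $r\in U_1$ to get $rU_2\subseteq U_1U_2\subseteq W$. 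Both proofs are sound, but yours is more elementary (it bypasses Theorem \ref{t.4.5} entirely) and, as you correctly observe, more general: it establishes $e^*$-continuity of $f_r$ for every $r\in R$ in any $e^*$-topological ring, with neither a unity nor the invertibility of $r$ required. The hypothesis $r\in R^*$ is genuinely used only in the paper's chosen derivation (to identify $f_r^{-1}[U]$ with $r^{-1}U$), not in the conclusion itself, so your remark amounts to a small strengthening of the stated theorem.
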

\begin{proof} Let $U\in O(R).$ Our aim is to show that $f^{-1}[U]\in e^*O(R).$ For this, we will prove $f_r^{-1}[U]=e^*\text{-}int(f_r^{-1}[U]). $ We have always $e^*\text{-}int(f_r^{-1}[U])\subseteq f_r^{-1}[U]\ldots (1)$ 

Now, let $y\in f_r^{-1}[U].$\\
$\left. 
\begin{array}{r} 
y\in f_r^{-1}[U]=r^{-1}U \\ U\in O(R)\Rightarrow U=int(U)
\end{array} 
\right\} \Rightarrow \!\!\!\!
\begin{array}{c} 
\\  
\left. 
\begin{array}{r} 
y\in r^{-1}\cdot int(U)  \overset{\text{Theorem }\ref{t.4.5}}{\Rightarrow}  y\in e^*\text{-}int(r^{-1}U) \\ r^{-1} U=f_r^{-1}[U]
\end{array} 
\right\} \Rightarrow 
\end{array}$
\\
$\begin{array}{l}
\Rightarrow y\in f_r^{-1}[U] 
\end{array}$

Then, we have $f_r^{-1}[U]\subseteq e^*\text{-}int(f_r^{-1}[U])\ldots (2)$

$(1),(2)\Rightarrow f_r^{-1}[U]=e^*\text{-}int(f_r^{-1}[U])\Rightarrow f_r^{-1}[U]\in e^*O(X).$
\end{proof}

\begin{theorem} Let $(R,+,\cdot,\tau)$ be an $e^*\text{-}$topological ring and $A\subseteq R.$ Then, the following properties hold for each $x\in R.$

$a)$ $x+ e^*\text{-}cl(A)\subseteq cl(x+A),$ 

$b)$ $e^*\text{-}cl(x+A)\subseteq x+cl(A),$

$c)$ $x+int(A)\subseteq e^*\text{-}int(x+A),$

$d)$ $int(x+A)\subseteq x+ e^*\text{-}int(A).$
\end{theorem}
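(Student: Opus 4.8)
The plan is to treat all four inclusions by the same template, invoking only condition $(i)$ of Definition \ref{def31} (the joint continuity of addition through $e^*$-open neighborhoods) together with the two standard characterizations of the $e^*$-operators: a point $y$ lies in $e^*\text{-}cl(A)$ iff every $V\in e^*O(R,y)$ meets $A$, and $y\in e^*\text{-}int(A)$ iff some $V\in e^*O(R,y)$ is contained in $A$. Parts $(a)$ and $(b)$ are the "closure" statements and parts $(c)$ and $(d)$ the "interior" ones; each mirrors the corresponding item of Theorem \ref{t.4.5}, with the multiplicative splitting replaced by an additive one and $R^*$-invertibility replaced by the automatic invertibility of elements under $+$.

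For $(a)$ I would take $z\in x+e^*\text{-}cl(A)$, write $z=x+y$ with $y\in e^*\text{-}cl(A)$, fix $W\in O(R,z)=O(R,x+y)$, and apply condition $(i)$ to the pair $x,y$ to obtain $U\in e^*O(R,x)$ and $V\in e^*O(R,y)$ with $U+V\subseteq W$. Since $y\in e^*\text{-}cl(A)$, there is $a\in V\cap A$; then $x+a\in U+V\subseteq W$ (because $x\in U$) and $x+a\in x+A$, so $W\cap(x+A)\ne\emptyset$, giving $z\in cl(x+A)$. Part $(b)$ is the dual: for $z\in e^*\text{-}cl(x+A)$ I would show $-x+z\in cl(A)$ by fixing $W\in O(R,-x+z)$, applying condition $(i)$ to the pair $-x,z$ to get $U\in e^*O(R,-x)$ and $V\in e^*O(R,z)$ with $U+V\subseteq W$, extracting $b=x+a\in V\cap(x+A)$, and noting $a=-x+b\in U+V\subseteq W$ lies in $W\cap A$.

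For the interior inclusions the open set itself serves as the neighborhood furnished by $(i)$. In $(c)$ I take $z=x+y\in x+int(A)$, so $-x+z=y\in int(A)\in O(R,y)$; applying $(i)$ to $-x,z$ yields $U\in e^*O(R,-x)$ and $V\in e^*O(R,z)$ with $U+V\subseteq int(A)\subseteq A$, whence $-x+V\subseteq A$, i.e.\ $V\subseteq x+A$, so $z\in e^*\text{-}int(x+A)$. Part $(d)$ is symmetric: for $z\in int(x+A)$ I write $z=x+y$ with $y\in A$, apply $(i)$ to $x,y$ with the open set $int(x+A)$ to obtain $U\in e^*O(R,x)$ and $V\in e^*O(R,y)$ with $U+V\subseteq int(x+A)\subseteq x+A$, deduce $x+V\subseteq x+A$ hence $V\subseteq A$, so $y\in e^*\text{-}int(A)$ and $z=x+y\in x+e^*\text{-}int(A)$.

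There is no genuinely hard step here; the only points demanding care are bookkeeping ones. First, one must split the relevant element as the correct sum before invoking condition $(i)$ — namely $z=x+y$ in $(a)$ and $(d)$ but $-x+z=(-x)+z$ in $(b)$ and $(c)$ — so that the base point whose neighborhood we control is exactly the one needed. Second, one repeatedly uses that the base point lies in its own $e^*$-open neighborhood (e.g.\ $x\in U$ or $-x\in U$) to pass from $U+V$ to a translate $x+V$ or $-x+V$, and then cancels in the additive group to convert $x+V\subseteq x+A$ into $V\subseteq A$. It is worth remarking that, unlike Theorem \ref{t.4.5}, none of these four inclusions requires a unity or an invertible $r$: condition $(i)$ of Definition \ref{def31} alone suffices.
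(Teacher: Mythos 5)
Your proof is correct and follows essentially the same route as the paper's: parts $(a)$--$(c)$ are argued identically, via condition $(i)$ of Definition \ref{def31} together with the neighborhood characterizations of $e^*$-closure and $e^*$-interior. The only cosmetic difference is in $(d)$, where the paper simply cites Theorem \ref{t.3.4}$(b)$ to get that $-x+U$ is $e^*$-open for the open set $U\subseteq x+A$, whereas you re-derive the needed containment directly from condition $(i)$; both are equally valid.
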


\begin{proof}
$a)$ Let $y\in x+e^*\text{-}cl(A).$ Our aim is to show that $y\in cl(x+A).$ Now, let $U\in O(R,y).$ If we prove $U\cap (x+A)\neq \emptyset$, then the proof complete.
\\
$\left.\begin{array}{rr} y\in x+e^*\text{-}cl(A) \Rightarrow   (\exists z\in e^*\text{-}cl(A))(y=x+z)\\ U\in O(R,y)\end{array}\right\}\Rightarrow 
$
\\
$
\begin{array}{l}\Rightarrow (\exists K\in e^*O(R,x))(\exists L\in e^*O(R,z))(\emptyset\ne (K+L)\cap (x+A)\subseteq U\cap (x+A))
\end{array}
$
\\
$
\begin{array}{l}\Rightarrow U\cap (x+A)\neq \emptyset.
\end{array}
$
\\

$b)$ Let $y\in e^*\text{-}cl(x+A).$ Our aim is to show that $y\in x+cl(A).$ Now, let $U\in O(R,-x+y).$
\\
$\begin{array}{rcl}
U\in O(R,-x+y)  \Rightarrow  (\exists K\in e^*O(R,-x) )(\exists L\in e^*O(R,y))(-x+L\subseteq K+L\subseteq U)
\end{array}$
\\
$\begin{array}{rcl}
\Rightarrow \emptyset\ne (-x+L)\cap A\subseteq U\cap A
\end{array}$

Therefore, $-x+y\in cl(A)$ which means $y\in x+cl(A).$\\

$c)$ Let $y\in x+ int(A).$ Our aim is to show that $y\in e^*\text{-}int(x+A).$\\
$\begin{array}{rcl} y\in x+int(A) & \Rightarrow & -x+y\in int(A)\in O(R) \\ & {\Rightarrow} & (\exists U\in e^*O(R,-x))(\exists V\in e^*O(R,y))(-x+V\subseteq U+V\subseteq int(A)\subseteq A) \\ & {\Rightarrow} & (\exists V\in e^*O(R,y))(V\subseteq x+A) \\  & \Rightarrow & y\in e^*\text{-}int(x+A). 
\end{array}$\\

$d)$ Let $y\in int(x+A).$ Our aim is to show that $y\in x+e^*\text{-}int(A).$
$$\begin{array}{rcl}
y\in int(x+A) & \Rightarrow & (\exists U\in O(R,y))(U\subseteq x+A) \\ & \Rightarrow & (\exists U\in O(R,y))(-x+U\subseteq A) \\ & \overset{\text{Theorem }\ref{t.3.4}}{\Rightarrow} & (-x+U\in e^*O(R,-x+y))(-x+U\subseteq A)
\\ & \Rightarrow & -x+y\in e^*\text{-}int(A)
\\ & \Rightarrow & y\in x+ e^*\text{-}int(A).\qedhere
\end{array}$$
\end{proof}

\begin{theorem} Let $(R,+,\cdot,\tau)$ 
be an $e^*\textsc{-}$topological ring and $A\subseteq R$. Then, we have the following properties.

$a)$ $-e^*\text{-}cl(A)\subseteq cl(-A)$,

$b)$ $e^*\text{-}cl(-A)\subseteq -cl(A)$,

$c)$ $-int(A)\subseteq e^*\text{-}int(-A)$,

$d)$ $int(-A)\subseteq -e^*\text{-}int(A)$.
\end{theorem}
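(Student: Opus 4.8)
The plan is to drive all four inclusions from a single structural input, namely condition $(ii)$ of Definition \ref{def31}, which is the only axiom of an $e^*$-topological ring that governs the inversion map $x\mapsto -x$. The four statements pair up by duality exactly as in the preceding theorem on translations: $(a)$ and $(b)$ are closure inclusions, proved by a neighborhood-meeting argument, while $(c)$ and $(d)$ are interior inclusions, proved by exhibiting a witnessing $e^*$-open set. Throughout I would use the elementary identities $-(-A)=A$ and $-(-y)=y$, together with the observation that $-K\subseteq W$ is equivalent to $K\subseteq -W$.

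For $(a)$ I would take $y\in -e^*\text{-}cl(A)$, so that $-y\in e^*\text{-}cl(A)$, and an arbitrary $U\in O(R,y)=O(R,-(-y))$; applying condition $(ii)$ at the point $-y$ yields $K\in e^*O(R,-y)$ with $-K\subseteq U$. Since $K$ is an $e^*$-open neighborhood of $-y\in e^*\text{-}cl(A)$, it meets $A$, and negating a point of $K\cap A$ produces a point of $U\cap(-A)$, giving $y\in cl(-A)$. Part $(b)$ is the mirror image: for $y\in e^*\text{-}cl(-A)$ and $U\in O(R,-y)$, condition $(ii)$ applied at the point $y$ gives $K\in e^*O(R,y)$ with $-K\subseteq U$; as $K$ meets $-A$, negating a point of $K\cap(-A)$ produces a point of $U\cap A$, so $-y\in cl(A)$, i.e.\ $y\in -cl(A)$.

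For the interior inclusions I would reverse the flow. In $(c)$, a point $y\in -int(A)$ has $-y\in int(A)$, so $int(A)\in O(R,-y)$; condition $(ii)$ at the point $y$ returns $K\in e^*O(R,y)$ with $-K\subseteq int(A)\subseteq A$, whence $K\subseteq -A$ and $y\in e^*\text{-}int(-A)$. Dually, in $(d)$ a point $y\in int(-A)$ admits $U\in O(R,y)$ with $U\subseteq -A$; applying condition $(ii)$ at the point $-y$ gives $K\in e^*O(R,-y)$ with $-K\subseteq U\subseteq -A$, so $K\subseteq A$ and $-y\in e^*\text{-}int(A)$, i.e.\ $y\in -e^*\text{-}int(A)$.

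The arguments are otherwise routine, so the only genuine point of care — and the step where sign errors creep in — is selecting the correct base point when invoking condition $(ii)$: because that condition converts an ordinary open neighborhood of $-x$ into an $e^*$-open neighborhood of $x$, one must decide in each part whether to instantiate it at $y$ or at $-y$, and then track the single negation that flips $K\cap A$ back into the target set. No deeper obstacle is expected, since inversion is a bijection and the $e^*$-closure and $e^*$-interior hypotheses feed directly into the neighborhood descriptions.
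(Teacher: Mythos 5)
Your proof is correct and follows essentially the same route as the paper's: both arguments reduce each part to converting an ordinary open neighborhood into an $e^*$-open neighborhood via the inversion axiom. The only cosmetic differences are that the paper packages this conversion as ``$-U\in e^*O(R)$'' (i.e.\ it reuses Theorem \ref{t.3.4}$(a)$ rather than re-invoking Definition \ref{def31}$(ii)$ to produce a fresh witness $K$ with $-K\subseteq U$), and it phrases the closure inclusions $(a)$ and $(b)$ contrapositively instead of directly.
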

\begin{proof}
$a)$ Let $y\notin cl(-A).$
$$\begin{array}{rcl} y\notin cl(-A) & \Rightarrow & (\exists U\in O(R,y))(U\cap (-A)=\emptyset) \\ & {\Rightarrow} & ( -U\in e^*O(R,-y))((-U)\cap A=\emptyset) \\ & {\Rightarrow} & -y\notin e^*\text{-}cl(A) \\ & {\Rightarrow} & y\notin -e^*\text{-}cl(A).\end{array}$$

$b)$ Let $y\notin -cl(A).$
$$\begin{array}{rcl} y\notin -cl(A) & \Rightarrow & -y\notin cl(A) \\ & \Rightarrow & (\exists U\in O(R,-y))(U\cap A=\emptyset) \\ & {\Rightarrow} & ( -U\in e^*O(R,y))((-U)\cap (-A)=\emptyset) \\ & {\Rightarrow} & y\notin e^*\text{-}cl(-A).\end{array}$$

$c)$ Let $y\in -int(A).$
$$\begin{array}{rcl} y\in -int(A) & \Rightarrow & -y\in int(A) \\ & {\Rightarrow} & (\exists U\in O(R,-y))(U\subseteq A) \\ & {\Rightarrow} & ( -U\in e^*O(R,y))(-U\subseteq -A) \\ & {\Rightarrow} & y\in e^*\text{-}int(-A).\end{array}$$

$d)$ Let $y\in int(-A).$
$$\begin{array}{rcl} y\in int(-A) & \Rightarrow & (\exists U\in O(R,y))(U\subseteq -A) \\ & {\Rightarrow} & (-U\in e^*O(R,-y))(-U\subseteq A) \\ & {\Rightarrow} & -y\in e^*\text{-}int(A) \\ & {\Rightarrow} & y\in -e^*\text{-}int(A).\qedhere \end{array}$$
\end{proof}

\begin{theorem}
Let $(R,+,\cdot,\tau)$ 
be an $e^*\textsc{-}$topological ring and $A\subseteq R.$ Then, we have the following properties for all $x\in R$. 

$a)$ $x+int(cl(\delta\text{-}int(A)))\subseteq cl(x+A),$

$b)$ $int(cl(\delta\text{-}int(x+A)))\subseteq x+cl(A),$

$c)$ $x+int(A)\subseteq cl(in(\delta\text{-}cl(x+A))),$

$d)$ $int(x+A)\subseteq x+cl(\delta\text{-}cl(A)).$
\end{theorem}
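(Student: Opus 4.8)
The plan is to reduce all four inclusions to the preceding theorem (the one asserting, for every $x\in R$ and $A\subseteq R$, that $x+e^*\text{-}cl(A)\subseteq cl(x+A)$, $e^*\text{-}cl(x+A)\subseteq x+cl(A)$, $x+int(A)\subseteq e^*\text{-}int(x+A)$, and $int(x+A)\subseteq x+e^*\text{-}int(A)$) by sandwiching with the two elementary containments
\[
int(cl(\delta\text{-}int(B)))\subseteq e^*\text{-}cl(B),\qquad e^*\text{-}int(B)\subseteq cl(int(\delta\text{-}cl(B))),
\]
which hold for every $B\subseteq R$. Neither needs the full Ekici identities: for the first, any $e^*$-closed $F\supseteq B$ satisfies $int(cl(\delta\text{-}int(F)))\subseteq F$ (the inequality characterizing $e^*$-closedness, obtained by complementing the definition of $e^*$-open), and monotonicity of $\delta\text{-}int$, $cl$ and $int$ turns $B\subseteq F$ into $int(cl(\delta\text{-}int(B)))\subseteq F$; intersecting over all such $F$ gives the claim. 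For the second, any $e^*$-open $G\subseteq B$ satisfies $G\subseteq cl(int(\delta\text{-}cl(G)))\subseteq cl(int(\delta\text{-}cl(B)))$ by definition and monotonicity, and taking the union over all such $G$ yields the claim.

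With these in hand, parts $(a)$ and $(b)$ are each a two-step chain. For $(a)$ I would apply the first containment with $B=A$ and translate by $x$ to get $x+int(cl(\delta\text{-}int(A)))\subseteq x+e^*\text{-}cl(A)$, then invoke $x+e^*\text{-}cl(A)\subseteq cl(x+A)$. For $(b)$ I would instead take $B=x+A$, so that $int(cl(\delta\text{-}int(x+A)))\subseteq e^*\text{-}cl(x+A)$, and chain with $e^*\text{-}cl(x+A)\subseteq x+cl(A)$.

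Parts $(c)$ and $(d)$ are dual, using the second containment. For $(c)$ I would start from $x+int(A)\subseteq e^*\text{-}int(x+A)$ and then apply the second containment with $B=x+A$ to reach $cl(int(\delta\text{-}cl(x+A)))$. For $(d)$ I would start from $int(x+A)\subseteq x+e^*\text{-}int(A)$ and apply the second containment with $B=A$, translated by $x$, giving $x+e^*\text{-}int(A)\subseteq x+cl(int(\delta\text{-}cl(A)))$; here I expect the displayed target should read $x+cl(int(\delta\text{-}cl(A)))$ in order to match $(c)$ and the characterization of $e^*\text{-}int$.

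Since every part collapses to composing one of the two containments above with the corresponding inclusion from the preceding theorem, there is no genuine conceptual obstacle. The only point demanding care is the verification of the two containments, and even there the argument is just monotonicity together with the defining inequalities for $e^*$-open and $e^*$-closed sets; I expect this bookkeeping, rather than any real difficulty, to be the most error-prone step.
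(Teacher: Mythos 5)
Your proof is correct, but it is organized around a different decomposition than the paper's. The paper does not route these inclusions through the $e^*$-closure/interior translation theorem (the one giving $x+e^*\text{-}cl(A)\subseteq cl(x+A)$, etc.); instead, for each part it exhibits a single concrete witness set and applies the defining inequality to it directly. For $(a)$ it notes $cl(x+A)\in C(R)$, hence $-x+cl(x+A)\in e^*C(R)$ by the translation-of-closed-sets theorem, so $int(cl(\delta\text{-}int(A)))\subseteq int(cl(\delta\text{-}int(-x+cl(x+A))))\subseteq -x+cl(x+A)$ by monotonicity, since $A\subseteq -x+cl(x+A)$; parts $(b)$--$(d)$ use the witnesses $x+cl(A)$, $x+int(A)$ and $-x+int(x+A)$ in the same way. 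Your version factors the same underlying facts differently: you first isolate the two general containments $int(cl(\delta\text{-}int(B)))\subseteq e^*\text{-}cl(B)$ and $e^*\text{-}int(B)\subseteq cl(int(\delta\text{-}cl(B)))$ (both of which you justify correctly from the defining inequalities and monotonicity) and then compose them with the already-proved translation inclusions for $e^*\text{-}cl$ and $e^*\text{-}int$. This buys modularity — the ring-theoretic content is used only once, inside the earlier theorem — at the cost of depending on that theorem, whereas the paper's argument is self-contained modulo the elementary translation results of Section 3. Your remark about part $(d)$ is also borne out: the paper's own proof in fact establishes $int(x+A)\subseteq x+cl(int(\delta\text{-}cl(A)))$, so the displayed $x+cl(\delta\text{-}cl(A))$ is indeed a typo (and in any case a weaker statement, since $cl(int(\delta\text{-}cl(A)))\subseteq \delta\text{-}cl(A)=cl(\delta\text{-}cl(A))$).
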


\begin{proof}
$a)$ Let $A\subseteq R$ and $x\in R.$
\\
$\begin{array}{l}
(A\subseteq R)(x\in R) \Rightarrow cl(x+A)\in C(R)\overset{\text{Theorem } \ref{t.3.4}}{\Rightarrow}   -x+cl(x+A)\in e^*C(R)
\end{array}$
\\
$\begin{array}{l}  \Rightarrow  int(cl(\delta\text{-}int(e^*\text{-}cl(A))))\subseteq int(cl(\delta\text{-}int(-x+cl(x+A))))\subseteq -x+cl(x+A)
\end{array}$
\\
$\begin{array}{l}  \Rightarrow   int(cl(\delta\text{-}int(A)))\subseteq int(cl(\delta\text{-}int(e^*\text{-}cl(A))))\subseteq -x+cl(x+A) 
\end{array}$
\\
$\begin{array}{l} \Rightarrow  x+int(cl(\delta\text{-}int(A)))\subseteq cl(x+A).
\end{array}$
\\

$b)$ Let $A\subseteq R$ and $x\in R.$\\
$\left.\begin{array}{rr} A\subseteq R \Rightarrow cl(A)\in C(R) \\ x\in R \end{array}\right\}\overset{\text{Theorem } \ref{t.3.4}}{\Rightarrow} x+cl(A)\in e^*C(R)$
\\
$
\begin{array}{l} \Rightarrow int(cl(\delta\text{-}int(x+A)))\subseteq int(cl(\delta\text{-}int(x+cl(A))))\subseteq x+cl(x+A).\end{array}
$
\\

$c)$ Let $A\subseteq R$ and $x\in R.$ \\
$
\left.\begin{array}{rr} A\subseteq R \Rightarrow int(A)\in O(R) \\ x\in R \end{array}\right\}\overset{\text{Theorem }\ref{t.3.4}}\Rightarrow x+int(A)\in e^*O(R)
$
\\
$
\begin{array}{l} \Rightarrow x+int(A)\subseteq cl(int(\delta\text{-}cl(x+int(A))))\subseteq cl(int(\delta\text{-}cl(x+A))).
\end{array}
$
\\

$d)$ Let $A\subseteq R$ and $x\in R.$\\
$\begin{array}{l} (A\subseteq R)(x\in R) \Rightarrow int(x+A)\in O(R) \overset{\text{Theorem }\ref{t.3.4} }{\Rightarrow} -x+int(x+A)\in e^*O(R)\end{array}$
\\
$\begin{array}{l} \Rightarrow  -x+int(x+A)\subseteq cl(int(\delta\text{-}cl(-x+int(x+A))))\subseteq cl(int(\delta\text{-}cl(A))).
\end{array}$
\end{proof}

\begin{theorem}
Let $(R,+,\cdot,\tau)$ be an $e^*$-topological ring and $A\subseteq R$. Then, we have the following properties.

$a)$ $-int(cl(\delta\text{-}int(A)))\subseteq cl(-A),$

$b)$ $int(cl(\delta\text{-}int(-A)))\subseteq -cl(A),$

$c)$ $-int(A)\subseteq cl(int(\delta\text{-}cl(-A))),$

$d)$ $int(-A)\subseteq -cl(int(\delta\text{-}cl(A))).$
\end{theorem}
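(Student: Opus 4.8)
The plan is to recognize this statement as the inversion counterpart of the preceding translation theorem and to run the same two-step engine for each of the four parts. The engine consists of (i) Theorem \ref{t.3.4} together with its closed analogue, which say that the inversion map carries open sets to $e^*$-open sets and closed sets to $e^*$-closed sets, and (ii) the definition of $e^*$-openness, $B\subseteq cl(int(\delta\text{-}cl(B)))$, together with its complementary form: a set $B$ is $e^*$-closed exactly when $int(cl(\delta\text{-}int(B)))\subseteq B$. This complementary form is obtained by taking complements in the $e^*$-open definition, using $R\setminus\delta\text{-}cl(C)=\delta\text{-}int(R\setminus C)$ and the usual interior/closure de Morgan rules. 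Throughout I will also use that $x\mapsto -x$ is a bijection, so it preserves inclusions, and that $int$, $cl$, $\delta\text{-}int$, $\delta\text{-}cl$ are monotone.

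First I would dispatch parts $c)$ and $d)$, which only need the $e^*$-open definition. For $c)$, since $int(A)\in O(R)$, Theorem \ref{t.3.4} gives $-int(A)\in e^*O(R)$, hence $-int(A)\subseteq cl(int(\delta\text{-}cl(-int(A))))$; because $int(A)\subseteq A$ forces $-int(A)\subseteq -A$, monotonicity of $cl\circ int\circ\delta\text{-}cl$ upgrades the right-hand side to $cl(int(\delta\text{-}cl(-A)))$, which is the claim. For $d)$, $int(-A)\in O(R)$ gives $-int(-A)\in e^*O(R)$, so $-int(-A)\subseteq cl(int(\delta\text{-}cl(-int(-A))))$; now $int(-A)\subseteq -A$ yields $-int(-A)\subseteq A$, and monotonicity replaces the inner set by $A$, giving $-int(-A)\subseteq cl(int(\delta\text{-}cl(A)))$; applying $-$ to both sides produces $int(-A)\subseteq -cl(int(\delta\text{-}cl(A)))$.

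Parts $a)$ and $b)$ are dual and use the closed analogue of Theorem \ref{t.3.4} together with the $e^*$-closed characterization. For $a)$, note $cl(-A)\in C(R)$, so $-cl(-A)\in e^*C(R)$ and therefore $int(cl(\delta\text{-}int(-cl(-A))))\subseteq -cl(-A)$; since $-A\subseteq cl(-A)$ gives $A\subseteq -cl(-A)$, monotonicity yields $int(cl(\delta\text{-}int(A)))\subseteq -cl(-A)$, and negating both sides gives $-int(cl(\delta\text{-}int(A)))\subseteq cl(-A)$. Part $b)$ is identical with $cl(A)$ in place of $cl(-A)$: from $cl(A)\in C(R)$ we get $-cl(A)\in e^*C(R)$, hence $int(cl(\delta\text{-}int(-cl(A))))\subseteq -cl(A)$, and $-A\subseteq -cl(A)$ feeds monotonicity to conclude $int(cl(\delta\text{-}int(-A)))\subseteq -cl(A)$. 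If one prefers to mirror the previous theorem verbatim, one may insert the intermediate set $e^*\text{-}cl(A)$ (resp. $e^*\text{-}cl(-A)$) into the chain, using $-e^*\text{-}cl(A)\subseteq cl(-A)$ (resp. $e^*\text{-}cl(-A)\subseteq -cl(A)$) from the preceding inversion-closure theorem, but this intermediate is not logically required.

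The only genuinely delicate point, and hence the step I would write out most carefully, is bookkeeping the direction of every inclusion as it passes through the inversion bijection and through the nested monotone operators; a single reversed containment would break the argument. A secondary point worth stating explicitly is the derivation of the dual characterization of $e^*$-closed sets from the $e^*$-open definition, since the four conclusions are really just this characterization (for $a,b$) and the definition itself (for $c,d$) transported along $x\mapsto -x$ by Theorem \ref{t.3.4}. No separation axioms or multiplicative structure enter; only the additive and inversion content of the $e^*$-topological ring is used.
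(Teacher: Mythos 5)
Your proposal is correct and follows essentially the same route as the paper: for $a)$ and $b)$ apply the closed-set analogue of Theorem \ref{t.3.4} to $cl(-A)$ and $cl(A)$ respectively, use the dual characterization $int(cl(\delta\text{-}int(B)))\subseteq B$ of $e^*$-closed sets, and push through monotonicity; for $c)$ and $d)$ apply Theorem \ref{t.3.4}$(a)$ to $int(A)$ and $int(-A)$ and use the $e^*$-open definition directly. The only cosmetic difference is that the paper does not spell out the derivation of the dual $e^*$-closed characterization, which you rightly flag as worth making explicit.
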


\begin{proof} $a)$ Let $A\subseteq R.$
$$\begin{array}{rcl} A\subseteq R & \Rightarrow & cl(-A)\in C(R) \\ & \overset{\text{Theorem }\ref{t.3.4} }{\Rightarrow} & -cl(-A)\in e^*C(R) \\ & {\Rightarrow} &  int(cl(\delta\text{-}int(A)))\subseteq int(cl(\delta\text{-}int(-cl(-A))))\subseteq -cl(-A)  \\  & \Rightarrow & -int(cl(\delta\text{-}int(A)))\subseteq cl(-A). 
\end{array}$$

$b)$ Let $A\subseteq R.$
$$\begin{array}{rcl} A\subseteq R & \Rightarrow & cl(A)\in C(R) \\ & \overset{\text{Theorem }\ref{t.3.4} }{\Rightarrow} & -cl(A)\in e^*C(R) \\ & {\Rightarrow} &  int(cl(\delta\text{-}int(-A)))\subseteq int(cl(\delta\text{-}int(-cl(A))))\subseteq -cl(A).
\end{array}$$

$c)$ Let $A\subseteq R.$
$$\begin{array}{rcl} A\subseteq R & \Rightarrow & int(A)\in O(R) \\ & \overset{\text{Theorem }\ref{t.3.4} }{\Rightarrow} & -int(A)\in e^*O(R) \\ & {\Rightarrow} &  -int(A)\subseteq cl(int(\delta\text{-}cl(-int(A))))\subseteq cl(int(\delta\text{-}cl(-A))). 
\end{array}$$

$d)$ Let $A\subseteq R.$
$$\begin{array}{rcl} A\subseteq R & \Rightarrow & int(-A)\in O(R) \\ & \overset{\text{Theorem }\ref{t.3.4} }{\Rightarrow} & -int(-A)\in e^*O(R) \\ & {\Rightarrow} &  -int(-A)\subseteq cl(int(\delta\text{-}cl(-int(-A))))\subseteq cl(int(\delta\text{-}cl(A))) \\ & {\Rightarrow} & int(-A)\subseteq -cl(int(\delta\text{-}cl(A))).\qedhere
\end{array}$$
\end{proof}
\begin{theorem}\label{t411}Let $(R,+,\cdot,\tau)$ be an $e^*$-topological ring and $A,B\subseteq R$. Then, $e^*\text{-}cl(A)+e^*\text{-}cl(B)\subseteq cl(A+B).$
\end{theorem}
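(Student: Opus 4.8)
The plan is to argue elementwise, mirroring the neighbourhood arguments already used in the earlier closure theorems of the paper. Fix $p\in e^*\text{-}cl(A)$ and $q\in e^*\text{-}cl(B)$; it then suffices to show $p+q\in cl(A+B)$, i.e. that every ordinary open set $W\in O(R,p+q)$ meets $A+B$. Once this is established for arbitrary such $p,q$, the desired inclusion $e^*\text{-}cl(A)+e^*\text{-}cl(B)\subseteq cl(A+B)$ follows immediately.

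So let $W\in O(R,p+q)$ be arbitrary. First I would apply the additive condition of Definition \ref{def31}$(i)$ to the pair $(p,q)$ together with the open neighbourhood $W$ of $p+q$: this produces $U\in e^*O(R,p)$ and $V\in e^*O(R,q)$ with $U+V\subseteq W$. This is the single place where the $e^*$-topological ring hypothesis enters, and it is precisely what substitutes for ordinary joint continuity of addition.

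Next I would invoke the neighbourhood description of $e^*$-closure: since $p\in e^*\text{-}cl(A)$ and $U\in e^*O(R,p)$, we have $U\cap A\neq\emptyset$, and likewise $V\cap B\neq\emptyset$. Picking $a\in U\cap A$ and $b\in V\cap B$ gives $a+b\in(U+V)\cap(A+B)\subseteq W\cap(A+B)$, so $W\cap(A+B)\neq\emptyset$. As $W$ was an arbitrary open neighbourhood of $p+q$, this yields $p+q\in cl(A+B)$, completing the argument.

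The only point that needs care, and the step I would flag as the genuine obstacle, is the neighbourhood characterization of $e^*$-closure, namely that $x\in e^*\text{-}cl(S)$ if and only if $U\cap S\neq\emptyset$ for every $U\in e^*O(R,x)$. This equivalence is exactly what the paper already uses in its proofs that $-A$ and $x+A$ are $e^*$-closed, and it rests on the fact that arbitrary unions of $e^*$-open sets are again $e^*$-open, so that $e^*\text{-}cl$ behaves like a Kuratowski-type closure relative to the family $e^*O(R)$. Granting this, the proof is routine; without it one would have to re-derive the ``meets every $e^*$-open neighbourhood'' property directly from the definition $e^*\text{-}cl(S)=\bigcap\{F\in e^*C(R):S\subseteq F\}$ before running the argument above.
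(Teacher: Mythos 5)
Your proposal is correct and follows essentially the same route as the paper's own proof: fix a point of $e^*\text{-}cl(A)+e^*\text{-}cl(B)$, apply Definition \ref{def31}$(i)$ to an open neighbourhood $W$ of it to get $e^*$-open sets $U,V$ with $U+V\subseteq W$, and use the ``meets every $e^*$-open neighbourhood'' characterization of $e^*$-closure to produce a point of $(U+V)\cap(A+B)\subseteq W\cap(A+B)$. Your explicit flagging of that characterization (which holds because arbitrary unions of $e^*$-open sets are $e^*$-open) is a point the paper leaves implicit, but the argument is the same.
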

\begin{proof} Let $z\in e^*\text{-}cl(A)+e^*\text{-}cl(A).$ Our aim is to show that $z\in cl(A+B).$ Now, let $W\in O(R,z).$
\\
$\left.\begin{array}{rr} z\in e^*\text{-}cl(A)+e^*\text{-}cl(A)\Rightarrow (\exists x\in e^*\text{-}cl(A))(\exists y\in e^*\text{-}cl(A))(z=x+y) \\ W\in O(R,z)\end{array}\right\}\Rightarrow $
\\
$\begin{array}{l}\Rightarrow (\exists U\in O(R,x))(\exists V\in O(R,y))(U+V\subseteq W)(U\cap A\neq\emptyset)(V\cap B\neq\emptyset)\end{array}$
\\
$\begin{array}{l}\Rightarrow (W\in O(R,z))((U\cap A)+(V\cap B)\ne\emptyset)(U+V\subseteq W)\end{array}$
\\
$\begin{array}{l}\Rightarrow (W\in O(R,z))(\exists t\in R)(t\in (U\cap A)+(V\cap B))(U+V\subseteq W)\end{array}$
\\
$\begin{array}{l}\Rightarrow (W\in O(R,z))(\exists u\in U\cap A)(\exists v\in V\cap B)(t=u+v)(U+V\subseteq W)\end{array}$
\\
$\begin{array}{l}\Rightarrow (W\in O(R,z))(u\in U)(u\in A)(v\in V)(v\in B)(U+V\subseteq W)\end{array}$
\\
$\begin{array}{l}\Rightarrow (W\in O(R,z))(u+v\in U+V)(u+v\in A+B)(U+V\subseteq W)\end{array}$
\\
$\begin{array}{l}\Rightarrow (W\in O(R,z))(u+v\in (U+V)\cap (A+B)\subseteq W\cap (A+B))\end{array}$
\\
$\begin{array}{l}\Rightarrow (W\in O(R,z))(W\cap (A+B)\neq \emptyset).\end{array}$
\end{proof}

\begin{remark}
The converse of inclusion given in Theorem \ref{t411} need not be true as shown by the following example.    
\end{remark}
\begin{example} 
Let $R=\{a,b,c,d\}$ and  $\tau=\{\emptyset,R,\{a\},\{a,b\}\}.$ Let the addition and multiplication operations on $R$ be as given in Example \ref{34}. For the subsets $A=\{a\}$ and $B=\{c\}$, we have $cl(A+B)=cl(\{c\})=\{c,d\}$ and $e^*\text{-}cl(A)+e^*\text{-}cl(B)=e^*\text{-}cl(\{a\})+e^*\text{-}cl(\{c\})=\{a\}+\{c\}=\{c\}.$ It is obvious that 
$cl(\{c\})=\{c,d\}\nsubseteq \{c\}= e^*\text{-}cl(A)+e^*\text{-}cl(B).$
\end{example}
\begin{theorem}Let $(R,+,\cdot,\tau_1)$ be an $e^*$-topological ring and
let $(S,+,\cdot,\tau_2)$ be a topological ring. If a ring homomorphism $f: R\to S$ is continuous at $0_R$, then $f$ is $e^*\text{-}$continuous.
\end{theorem}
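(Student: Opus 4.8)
The plan is to verify $e^*$-continuity through its neighbourhood characterisation (Ekici's Lemma stated in Section~2): it suffices to show that for each $x\in R$ and each $V\in O(S,f(x))$ there exists $U\in e^*O(R,x)$ with $f[U]\subseteq V$. So I would fix such an $x$ and $V$, and, since $f$ is a ring homomorphism, record at the outset the two algebraic facts I will lean on, namely $f(0_R)=0_S$ and $f(a+b)=f(a)+f(b)$ for all $a,b\in R$.

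The idea is to transport the problem to the additive identities, where the hypothesis ``continuous at $0_R$'' actually lives. Because $S$ is a topological ring it is in particular an additive topological group, so the translation $s\mapsto -f(x)+s$ is a homeomorphism of $S$; hence $W:=-f(x)+V$ is open in $S$, and it contains $0_S=-f(x)+f(x)$ because $f(x)\in V$. Thus $W\in O(S,0_S)=O(S,f(0_R))$, and continuity of $f$ at $0_R$ furnishes an ordinary open set $U_0\in O(R,0_R)$ with $f[U_0]\subseteq W$.

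Now I would translate back inside $R$ and set $U:=x+U_0$. Since $U_0\in O(R)$ and $R$ is an $e^*$-topological ring, Theorem \ref{t.3.4}$(b)$ gives $U=x+U_0\in e^*O(R)$; moreover $0_R\in U_0$ forces $x=x+0_R\in U$, so in fact $U\in e^*O(R,x)$, exactly the kind of neighbourhood the characterisation demands. It then remains to check the inclusion $f[U]\subseteq V$: for $u\in U_0$ the homomorphism identity yields $f(x+u)=f(x)+f(u)$, while $f(u)\in f[U_0]\subseteq W=-f(x)+V$ gives $f(x+u)\in f(x)+(-f(x)+V)=V$. Hence $f[U]\subseteq V$, and Ekici's Lemma delivers the $e^*$-continuity of $f$.

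The one delicate point, and the step I expect to be the crux, is the passage from the pointwise hypothesis at $0_R$ to an $e^*$-open neighbourhood at an arbitrary $x$; this is precisely where the $e^*$-topological ring structure of $R$ enters, through Theorem \ref{t.3.4}$(b)$, which guarantees that additive translates of ordinary open sets are $e^*$-open. Everything else is the standard topological-group translation argument on the side of $S$, with the homomorphism identities $f(0_R)=0_S$ and $f(a+b)=f(a)+f(b)$ used only to keep the two translations compatible across $f$.
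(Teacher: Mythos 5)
Your proposal is correct and follows essentially the same route as the paper's proof: translate $V$ to an open neighbourhood $-f(x)+V$ of $0_S$, apply continuity at $0_R$ to get an open $U_0\ni 0_R$, and translate back to $U=x+U_0$, which is $e^*$-open by Theorem \ref{t.3.4}$(b)$. If anything, you are more careful than the paper in explicitly citing Theorem \ref{t.3.4}$(b)$ for the $e^*$-openness of the translate and in invoking the neighbourhood characterisation of $e^*$-continuity.
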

\begin{proof} Let $f$ be a homomorphism and continuous at $0_R.$ Our aim is to show that $f$ is $e^*$-continuous. Now, let $V\in O(R,f(x)).$\\
$\left.\begin{array}{rr} V\in O(R,f(x)) \Rightarrow f(x)=f(x+0_R)\in V\in O(R) \\ f\text{ is homomorphism}\end{array}\right\}\Rightarrow 
$
\\
$\left.\begin{array}{rr} \Rightarrow f(x)+f(0_R)\in V\in O(R) \Rightarrow -f(x)+V\in O(R,f(0_R)) \\ f\text{ is continuous in} \ 0_R \end{array}\right\}\Rightarrow 
$
\\
$\begin{array}{l}\Rightarrow (\exists W\in O(R, O_R))(f[W]\subseteq -f(x)+V)\end{array}$
\\
$\left.\begin{array}{rr} \Rightarrow  (\exists W\in O(R, O_R))(f(x)+f[W]\subseteq V) \\ f \text{ is homomorphism}   \end{array}\right\}\Rightarrow $
\\
$\left.\begin{array}{rr} \Rightarrow  (\exists W\in O(R, O_R))(f[x+W]\subseteq V)  \\ U:=x+W \end{array}\right\}\Rightarrow  ( U\in e^*O(R,x)) (f[U]\subseteq V).
$
\end{proof}
\section{Acknowledgements}
We would like to thank the anonymous reviewer(s) for their careful reading of our manuscript and their insightful comments and suggestions.

\bibliographystyle{amsplain}

\end{document}